  \def\corref#1{}%
  \def\fnref#1{}%
\newtheorem{theorem}{Theorem}[section]
\newtheorem{lemma}{Lemma}[section]
\newtheorem{remark}{Remark}
\newtheorem{corollary}{Corollary}[section]
\newtheorem{assumption}{Assumption}
\begin{document}

\begin{frontmatter}

\title{A Generalized Block Circulant Preconditioner for Crank-Nicolson All-at-Once Systems with Applications to Option Pricing PDEs}

%% or include affiliations in footnotes:
\author[addr1]{Yong-Liang Zhao\fnref{equal}}%\corref{correspondingauthor}
%\cortext[correspondingauthor]{Corresponding author}
\ead{ylzhaofde@sina.com}

\author[addr2]{Yao Li\fnref{equal}}
\ead{1623646703@qq.com}

\author[addr2]{Xian-Ming Gu\corref{correspondingauthor}}
\cortext[correspondingauthor]{Corresponding author}
\ead{guxianming@live.cn}

\author[addr3]{Cornelis W. Oosterlee}
\ead{c.w.oosterlee@uu.nl}
%%%%%%%%%%%%%%%%%%%%%%%%%%%%%%%%%%%%%%%%%%%%%%
\fntext[equal]{These authors contributed equally to this work.}

\address[addr1]{School of Mathematical Sciences,
Sichuan Normal University, Chengdu, Sichuan 611731, P.R. China}
\address[addr2]{School of Mathematics, Southwestern University of Finance and Economics, Chengdu, Sichuan 611130, P.R. China}
\address[addr3]{Mathematical Institute, Utrecht University, Budapestlaan 6, 3584 CD Utrecht, The Netherlands}

\begin{abstract}
The Crank--Nicolson (CN) method is a widely used time integration scheme for evolutionary partial differential equations (PDEs) arising in various scientific and engineering disciplines. Since the numerical solution at each time level depends on the solution at the previous time level, the resulting discretization is inherently sequential and therefore difficult to parallelize in time. In this paper, we develop an all-at-once formulation of the CN discretization together with a generalized block circulant preconditioner that enables an efficient parallel-in-time solution within a Krylov subspace framework. We establish a detailed spectral analysis of the preconditioned system, proving that most eigenvalues are equal to $1$, while the remaining eigenvalues are confined to the annulus:
% the set
\begin{equation*}
\left\{
z\in\mathbb{C}:
\frac{1}{1+\alpha}<|z|<\frac{1}{1-\alpha},
\ \Re(z)>0
\right\},
\end{equation*}
where $0<\alpha<1$ is a free parameter. 
Besides, the efficient implementation of the proposed preconditioner is described. Given certain conditions, we prove that the preconditioned GMRES($m$) method achieves a fast convergence rate independent of discretization stepsizes from the residual point of view. Finally, we verify both theoretical findings and the efficacy of the proposed preconditioner via numerical experiments on financial option pricing PDEs (even with variable coefficients).
\end{abstract}

\begin{keyword}
Evolutionary PDEs\sep Crank-Nicolson scheme\sep all-at-once discretization\sep parallel-in-time preconditioning\sep
Krylov subspace methods
\MSC[2010] 65L05\sep 65N22\sep 65F10
\end{keyword}

\end{frontmatter}

%\linenumbers

\section{Introduction}
\label{sec1}

Evolutionary partial differential equations (PDEs) arise in many scientific and engineering applications, including fluid dynamics, image processing, physics-based animation, mechanical systems, earth sciences, and mathematical finance \cite{Ascher08,Duffy22}. In this work, we consider the numerical solution of the class of evolutionary PDEs
\begin{equation}
\begin{cases}
\partial_{t}u(\bm{x},t)=\mathcal{L}u(\bm{x},t)+f(\bm{x},t),
&(\bm{x},t)\in\Omega\times(0,T],\\
u(\cdot,0)=u_0,
&\mathrm{in}\ \Omega,
\end{cases}
\label{eq1.1}
\end{equation}
where ${\bm x}=(x_1,\ldots,x_d)$, $\Omega$ is a bounded convex domain in $\mathbb{R}^d$ ($d=1,2,3$) with smooth boundary, and $\mathcal{L}$ denotes a spatial differential operator. Typical examples include
\begin{equation}
\mathcal{L}=
\begin{cases}
\displaystyle
\sum_{j,\ell=1}^{d}
\left[
\frac{\partial}{\partial x_\ell}
\left(
a_{j\ell}(\bm{x})
\frac{\partial}{\partial x_j}
\right)
-
b_\ell(\bm{x})
\frac{\partial}{\partial x_\ell}
\right]
-
c_0(\bm{x}),
&
\text{self-adjoint elliptic operator},
\\[1.5ex]
\displaystyle
\sum_{j=1}^{d}
\kappa_j
\frac{\partial^{\beta_j}}
{\partial |x_j|^{\beta_j}},
&
\text{Riesz fractional operator},
~
\beta_j\in(1,2),
\end{cases}
\label{eq1.2xz}
\vspace{-0.5em}
\end{equation}
whose precise definitions can be found in \cite{Evans2010,Lischke20}. Throughout the paper we assume that problem \eqref{eq1.1} is well-posed and admits a unique solution.

Analytical solutions of evolutionary PDEs are rarely available in practical applications, making efficient numerical methods indispensable. Standard discretization techniques include finite difference, finite element and spectral methods \cite{Tadmor12}. After spatial discretization, the resulting system of ordinary differential equations is commonly advanced in time by a sequential time-stepping method. 
% Since each time level depends on the previous one, these methods are inherently sequential. 
These methods are inherently sequential due to their local historical dependencies.
For 
% long time intervals or 
fine temporal meshes, this sequential character may become a computational bottleneck.

This observation has motivated the development of parallel-in-time (PinT) algorithms, including the inverse Laplace transform method \cite{Sheen03}, the parareal method \cite{Lions2001}, and MGRIT \cite{Falgout14}. Among the various PinT strategies, all-at-once (AaO) methods discretize the entire space--time problem simultaneously, resulting in a large linear system that can be solved efficiently by preconditioned Krylov subspace methods; see \cite{McDonald18,WATHEN2022,gander21paradiag}.

The efficiency of an AaO approach depends strongly on both the underlying time integrator and the corresponding preconditioner. McDonald \emph{et al.}~\cite{McDonald18} introduced a block circulant (BC) preconditioner for AaO systems arising from backward differentiation formulae (BDF) for evolutionary PDEs, while Goddard and Wathen \cite{Goddard19} demonstrated its parallel performance. Lin and Ng \cite{lin2021all} subsequently proposed a generalized block circulant (gBC) preconditioner by introducing a parameter $\alpha\in(0,1)$ into the top-right block of the BC preconditioner. This additional degree of freedom considerably improves the performance when the diffusion coefficient is small. Both approaches rely on a modified diagonalization technique originally proposed by Maday and R{\o}nquist \cite{Maday2008}.

Although several related PinT preconditioners have been developed \cite{Hon2023,Hon2022}, most existing analyses are restricted to time discretizations at integer time levels. Extending these results to schemes involving non-integer time levels is far from straightforward. Moreover, for several important classes of PDEs the spatial discretization matrix possesses zero eigenvalues, rendering the classical BC preconditioner singular. The gBC preconditioner overcomes this difficulty, but the available analysis is limited to the implicit Euler method and symmetric positive definite spatial discretization matrices.

In this paper we consider the Crank--Nicolson (C-N) time discretization \cite{Crank47}, one of the most widely used second-order implicit schemes for evolutionary PDEs. After spatial discretization, the C-N scheme is reformulated as an AaO linear system with a block lower triangular Toeplitz structure. We then adapt the generalized BC preconditioner to this setting and establish a detailed theoretical analysis.

More specifically, we derive sharp bounds for the condition number of the AaO system and investigate the invertibility of the proposed preconditioner. Assuming that the spatial discretization matrix is negative semidefinite, we prove that most eigenvalues of the preconditioned matrix are equal to $1$, while the remaining eigenvalues are confined to a small explicitly characterized region (annulus) in the complex plane. This spectral result accounts for the fast convergence of the preconditioned GMRES($m$) method (where $m$ is called the restart parameter) and is sharper than existing analyses for related PinT preconditioners, such as that in \cite{Gan2022}, where the spatial matrix is assumed to be symmetric positive definite and the eigenvalue moduli are merely bounded in $[1-\epsilon, 1+\epsilon]$ without explicit dependence on $\alpha$ and spatial discretizations. We further derive a mesh-independent convergence estimate for GMRES($m$) under a mild condition on the parameter $\alpha$. In addition, we show how the proposed preconditioner can be implemented efficiently in parallel, discuss its computational complexity and memory requirements, and demonstrate that a slight modification enables its application to financial PDEs with time-dependent coefficients.

The remainder of this paper is organized as follows. Section~\ref{sec2} derives the AaO system associated with the Crank--Nicolson discretization and studies its invertibility and condition number. Section~\ref{sec3} introduces the generalized block circulant preconditioner and presents its spectral analysis together with an efficient parallel implementation. A mesh-independent convergence analysis of preconditioned GMRES($m$) is given in Section~\ref{sec4}. Numerical experiments, including applications to European option pricing, are presented in Section~\ref{sec5}. Finally, conclusions are drawn in Section~\ref{sec6}.
\vspace{-2mm}
%%%%%%%%%%%%%%%%%%%%
\section{The C-N scheme and its AaO system}
\label{sec2}
In this section, we first derive a C-N scheme for approximating Eq.~\eqref{eq1.1} and give some related analyses.
\subsection{The C-N scheme}
For a given positive integer $N_t$, let $\tau = \frac{T}{N_t}$
be the time step size.
Then, we obtain time steps $t_k = k \tau$ ($k = 0,1,\ldots,N_t$).
Let $A \in \mathbb{R}^{N_x \times N_x} \approx \mathcal{L}$ be the spatial discretization matrix
via a finite difference method.

Combining the C-N time integrator, the matrix form of the C-N scheme
of Eq.~\eqref{eq1.1} is derived as
\begin{equation}\label{eq2.1}
\frac{\bm{u}^{k + 1} - \bm{u}^k}{\tau} = A \frac{\bm{u}^{k + 1} + \bm{u}^k}{2} + \bm{f}^{k + \frac{1}{2}},
\quad k = 0,1,\ldots,N_t - 1,
\vspace{-0.5em}
\end{equation}
where $\bm{u}^k$ is a lexicographically ordered vector collecting the approximate
solution of $u(\cdot, t_k)$ over all the space grids.
Similarly, $\bm{f}^{k + \frac{1}{2}}$ is a lexicographically ordered vector collecting $f(\cdot, t_{k + \frac{1}{2}})$ over all the space grids.
Here $t_{k + \frac{1}{2}} = \left( k + \frac{1}{2} \right) \tau$. 
To investigate the unconditional stability and convergence of the C-N scheme (\ref{eq2.1}),
% In general, we should investigate the conditions for proving the unconditional stability and convergence of the C-N scheme (\ref{eq2.1}),
we need the following lemma.
\begin{lemma}{\rm(\cite[pp. 168-169]{Hairer96})}
Let the rational function $R(z)$ be bounded for $\Re(z)\leq 0$ and assume that
the matrix $A\in\mathbb{R}^{N_x\times N_x}$ is negative semi-definite, i.e., ${\bm y}^{\top}A{\bm y}\leq 0$ for all ${\bm y}\in
\mathbb{R}^{N_x}$. Then, we have
\begin{equation}
\|R(A)\|_2\leq \sup_{\Re(z)\leq 0}|R(z)|,
\end{equation}
where $\Re(\cdot)$ is the real part of a complex number.
\label{lem2.1}
% \vspace{-2.5em}
\end{lemma}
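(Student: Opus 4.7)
My plan is to reduce the half-plane estimate to the classical von Neumann inequality on the unit disk via a Cayley transform. First, I would note that the real dissipativity hypothesis $\langle y, Ay\rangle \leq 0$ on $\mathbb{R}^{N}$ forces $(A+A^{T})/2$ to be negative semi-definite, hence $\Re{\rm e}\langle y, Ay\rangle \leq 0$ for every $y\in\mathbb{C}^{N}$. In particular $\Re{\rm e}\langle y,(A-I)y\rangle\leq -\|y\|_2^{2}$, so $A-I$ is invertible and the Cayley transform $T:=(A+I)(A-I)^{-1}$ is well-defined. A short algebraic manipulation then shows that $(I+T)(T-I)^{-1}=A$, so $A$ is recovered from $T$ via the inverse Cayley map $\varphi^{-1}(\lambda)=(1+\lambda)/(\lambda-1)$.

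The next and key step is to prove that $T$ is a spectral-norm contraction. A direct computation gives
\[
(A-I)^{*}\bigl(I-T^{*}T\bigr)(A-I)=-2(A+A^{*}),
\]
whose right-hand side is positive semi-definite by the dissipativity of $A$; since $A-I$ is invertible, this yields that $I-T^{*}T$ is positive semi-definite, i.e., $\|T\|_{2}\leq 1$. With this contractivity in hand, I would transfer $R$ to the disk by setting $\widetilde R(\lambda):=R(\varphi^{-1}(\lambda))$. Since $R$ is bounded on the closed left half-plane, all its poles lie in $\{\Re{\rm e}(z)>0\}$, which $\varphi$ sends outside the closed disk; hence $\widetilde R$ is rational and analytic on a neighborhood of $\{|\lambda|\leq 1\}$, with
\[
\sup_{|\lambda|\leq 1}\bigl|\widetilde R(\lambda)\bigr|=\sup_{\Re{\rm e}(z)\leq 0}|R(z)|.
\]
Because rational functional calculus respects composition and $A=\varphi^{-1}(T)$, one also has $R(A)=\widetilde R(T)$.

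Finally, the classical von Neumann inequality applied to the contraction $T$ and the rational function $\widetilde R$ yields
\[
\|R(A)\|_{2}=\|\widetilde R(T)\|_{2}\leq \sup_{|\lambda|\leq 1}\bigl|\widetilde R(\lambda)\bigr|=\sup_{\Re{\rm e}(z)\leq 0}|R(z)|,
\]
which is the desired bound. I expect the main obstacle to be the contractivity check: the clean identity $(A-I)^{*}(I-T^{*}T)(A-I)=-2(A+A^{*})$ is precisely what prevents any spurious multiplicative constant from appearing in the final estimate, in contrast with general numerical-range bounds of Crouzeix type where a factor $\leq 1+\sqrt{2}$ is typically unavoidable.
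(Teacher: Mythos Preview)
The paper does not prove this lemma; it is quoted from the cited reference (Hairer--Wanner) without proof. Your Cayley-transform reduction to von Neumann's inequality is correct and is in fact the classical argument used in that reference (Chapter~IV, Theorem~11.2): one shows the Cayley transform of a dissipative matrix is a contraction via exactly the identity $(A-I)^{*}(I-T^{*}T)(A-I)=-2(A+A^{*})$ you wrote down, pulls the rational function back to the disk, and invokes von Neumann. Two small technicalities worth spelling out in a polished write-up: boundedness of the rational $R$ on the closed left half-plane forces the numerator degree not to exceed the denominator degree, so $R(\infty)$ is finite and $\widetilde R$ has no pole at $\lambda=1$; and von Neumann's inequality, stated for polynomials, extends to rational functions analytic on a neighborhood of the closed disk by a routine approximation or dilation argument. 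Neither is a gap in your plan.
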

At this stage, we can theoretically prove that the C-N scheme (\ref{eq2.1}) is unconditionally stable.
\begin{theorem}
If the spatial matrix $A$ is negative semi-definite, then the C-N scheme (\ref{eq2.1}) is unconditionally stable.
\label{the2.1}
\end{theorem}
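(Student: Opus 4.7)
The plan is to recast the CN recursion as an explicit update driven by the stability function of the Crank-Nicolson method, and then control both the propagation of initial data and the accumulated forcing via the uniform bounds supplied by \Cref{lem2.1}.

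First, I would rearrange \cref{eq2.1} into the one-step form
\begin{equation*}
\bm{u}^{k+1} = R(A)\,\bm{u}^{k} + \tau\bigl(I - \tfrac{\tau}{2} A\bigr)^{-1}\bm{f}^{k+\frac{1}{2}}, \qquad R(z) := \frac{1 + \tau z/2}{1 - \tau z/2}.
\end{equation*}
The invertibility of $I - \tfrac{\tau}{2}A$ is guaranteed because every eigenvalue of $A$ has non-positive real part (hence $1 - \tau\lambda/2 \neq 0$ for all $\lambda \in \sigma(A)$), so $R(A)$ is well defined.

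Next, I would establish the two scalar estimates that feed into \Cref{lem2.1}. For any $z$ with $\Re\mathrm{e}(z) \leq 0$, writing $z = x + i y$ with $x \leq 0$ gives
\begin{equation*}
|1 - \tau z/2|^{2} - |1 + \tau z/2|^{2} = -2\tau x \geq 0, \qquad |1 - \tau z/2|^{2} = (1 - \tau x/2)^{2} + (\tau y/2)^{2} \geq 1,
\end{equation*}
so that $|R(z)| \leq 1$ and $|1/(1-\tau z/2)| \leq 1$ uniformly on the closed left half-plane. Since $A$ is negative semi-definite, \Cref{lem2.1} then immediately yields
\begin{equation*}
\|R(A)\|_{2} \leq 1 \quad \text{and} \quad \bigl\|(I - \tfrac{\tau}{2}A)^{-1}\bigr\|_{2} \leq 1.
\end{equation*}

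Finally, I would unroll the recursion to obtain
\begin{equation*}
\bm{u}^{m} = R(A)^{m}\bm{u}^{0} + \tau \sum_{k=0}^{m-1} R(A)^{m-1-k}\bigl(I - \tfrac{\tau}{2}A\bigr)^{-1}\bm{f}^{k+\frac{1}{2}},
\end{equation*}
and take $\|\cdot\|_{2}$-norms; the submultiplicativity of the spectral norm together with the two bounds above, and the identity $m\tau \leq T$, give
\begin{equation*}
\|\bm{u}^{m}\|_{2} \leq \|\bm{u}^{0}\|_{2} + T \max_{0 \leq k \leq M-1}\|\bm{f}^{k+\frac{1}{2}}\|_{2},
\end{equation*}
which holds for any time step $\tau>0$ and any mesh, i.e.\ unconditional stability. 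The only non-routine step is the pair of scalar inequalities on the closed left half-plane; everything after that is a clean application of \Cref{lem2.1} and the triangle inequality, so I do not anticipate a real obstacle beyond keeping the dependence on $\tau$ explicit.
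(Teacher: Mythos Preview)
Your proof is correct and follows essentially the same route as the paper: both rely on \Cref{lem2.1} applied to the CN stability function to obtain $\|R(A)\|_2\le 1$. The only cosmetic difference is that the paper works with the homogeneous error equation and concludes $\|\bm e^{k+1}\|_2\le\|\bm e^k\|_2$, whereas you carry the forcing term along and arrive at the (slightly stronger) a~priori bound $\|\bm u^m\|_2\le\|\bm u^0\|_2+T\max_k\|\bm f^{k+1/2}\|_2$; the underlying mechanism is identical.
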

\begin{proof} Let ${\bm u}^k$ and $\widetilde{{\bm u}}^k$ be the exact and numerical
solutions of (\ref{eq2.1}), and the error ${\bm e}^k = {\bm u}^k - \widetilde{{\bm u}}^k$. Then,
from (\ref{eq2.1}), we get error equation:
\begin{equation}
\left(I_s - \frac{\tau}{2}A\right){\bm e}^{k+1} = \left(I_s + \frac{\tau}{2}A\right){\bm e}^{k},
\end{equation}
where $I_s$ represents an identity matrix with size $N_x$.

Since $A$ is negative semi-definite, then $I_s - \frac{\tau}{2}A$ is invertible. Therefore, one gets
\begin{equation}
{\bm e}^{k+1} = R\left(\tau A\right){\bm e}^{k}
\end{equation}
with $R(z) := \frac{1+z/2}{1-z/2}$ and $\tau A$ also being negative semi-definite. By Lemma \ref{lem2.1}, it follows that
\begin{equation}
\left\|R\left(\tau A\right)\right\|_2\leq \sup_{\Re(z)\leq 0}|R(z)| = \sup_{\Re(z)= 0}|R(z)| = 1.
\label{eq2.5}
\end{equation}
From (\ref{eq2.5}), it concludes that
\begin{equation}
\|{\bm e}^{k+1}\|_2 = \|R\left(\tau A\right){\bm e}^{k}\|_2\leq
\left\|R\left(\tau A\right)\right\|_2\cdot\|{\bm e}^{k}\|_2\leq \|{\bm e}^{k}\|_2.
\end{equation}
This completes the proof.
\end{proof}

With the help of the same conditions of Theorem \ref{the2.1} and spatial discretization, it is easy to combine
Theorem \ref{the2.1} with Lemma \ref{lem2.1} to present and prove
the convergence of the C-N scheme (\ref{eq2.1}). However, in this section, we do not specify which
spatial discretization is chosen to obtain the spatial matrix, thus here we omit the specific convergence
result of the C-N scheme (\ref{eq2.1}) for clarity.

\subsection{C-N AaO system}
In order to construct the parallel method based on the C-N scheme, we will derive the
so-called AaO linear system. Before deriving the C-N AaO system,
an auxiliary symbols is introduced:
$\bm{0}$ is the zero vector or matrix with suitable size.
Denote
\begin{equation*}
\bm{u} = \left[ \left( \bm{u}^1 \right)^{\top},  \left( \bm{u}^2 \right)^{\top},\ldots, \left( \bm{u}^{N_t} \right)^{\top} \right]^{\top} \quad\mathrm{and}\quad
\bm{f} = \left[ \left( \bm{f}^{\frac{1}{2}} \right)^{\top}, \left( \bm{f}^{\frac{3}{2}} \right)^{\top},
\ldots, \left( \bm{f}^{N_t - \frac{1}{2}} \right)^{\top} \right]^{\top}.
\end{equation*}

Then, the all-at-once system can be written as:
\begin{equation}
\mathcal{M} \bm{u} = \bm{\xi} + \tau\bm{f}:={\bm b},
\label{eq2.2}
\end{equation}
where
\begin{equation*}
\bm{\xi} = \left[ \left[\left( I_s + \frac{1}{2} \tilde{A} \right) \bm{u}^0 \right]^\top, \bm{0}^\top, \ldots,\bm{0}^\top \right]^\top \quad \mathrm{and} \quad
\mathcal{M} = B_1 \otimes I_s - B_2 \otimes \tilde{A}
\end{equation*}
with
%%%%%%%%
$\tilde{A} = \tau A$,  %= \frac{\tau}{h^2} L
$B_1 = \mathrm{tridiag}(-1,1,0) \in \mathbb{R}^{N_t \times N_t}$ and
$B_2 = \frac{1}{2}\, \mathrm{tridiag}(1,1,0) \in \mathbb{R}^{N_t \times N_t}$.

% Obviously, a general sparse direct solver can be applied to the system \eqref{eq2.2},
If a general sparse direct solver is applied to the system \eqref{eq2.2},
the computational cost is $\mathcal{O}(N_t^3 N^{3}_x)$.
Considering the special structure of $\mathcal{M}$,
if using the Gaussian elimination based block forward substitution method \cite{golub2013matrix} to solve \eqref{eq2.2},
the complexity is of order $\mathcal{O}(N_t N_x^3 + N_t N_x^2)$.
Let
$ \bm{U} = \left[ \bm{u}^1 , \bm{u}^2, \ldots, \bm{u}^{N_t} \right] \in \mathbb{R}^{N_x \times N_t}, $
then the linear system \eqref{eq2.2} can be reformulated as 
\begin{equation*}
    \bm{U} B_1^{\top} - \tilde{A} \bm{U} B_2^{\top}
    = \left[ \left( I_s + \frac{1}{2} \tilde{A} \right) \bm{u}^0 + \tau \bm{f}^{\frac{1}{2}} , \tau \bm{f}^{\frac{3}{2}}, \ldots, 
    \tau \bm{f}^{N_t - \frac{1}{2}} \right].
\end{equation*}
If using the Bartels-Stewart method \cite{Gardiner1992} to solve the generalized Sylvester equation, the computational cost is $\mathcal{O}(N_t^3 + N_x^3)$.
This is still expensive, especially for high-dimensional problems with fine mesh. On the other hand,
we can estimate the condition number of the coefficient matrix of Eq.~\eqref{eq2.2}, this will guide
us to discuss the design of an efficient solver for such a large AaO system.
\begin{theorem}
Suppose that $\Re(\lambda(A)) \leq 0$. Then, the following bounds hold for the critical singular values
of $\mathcal{M}$ in \eqref{eq2.2}:
\begin{equation}
\sigma_{\max}(\mathcal{M})\leq 2 + \tau\|A\|_2,\quad \sigma_{\min}(\mathcal{M})\geq \frac{1}{N_t},
\end{equation}
so that $\mathrm{cond}(\mathcal{M})\leq 2N_t + T\|A\|_2$.
\end{theorem}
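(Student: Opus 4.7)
The upper bound on $\sigma_{\max}(\mathcal{M})$ is a one-line consequence of the Kronecker sum structure. By the triangle inequality and the identity $\|X\otimes Y\|_2 = \|X\|_2\|Y\|_2$,
\[
\sigma_{\max}(\mathcal{M}) = \|\mathcal{M}\|_2 \le \|B_1\|_2\|I_s\|_2 + \|B_2\|_2\|\tilde{A}\|_2.
\]
A direct inspection of the row and column sums of the two lower bidiagonal matrices gives $\|B_1\|_1 = \|B_1\|_\infty = 2$ and $\|B_2\|_1 = \|B_2\|_\infty = 1$, so that $\|B_1\|_2 \le 2$ and $\|B_2\|_2 \le 1$ by the standard $\|\cdot\|_2 \le \sqrt{\|\cdot\|_1\|\cdot\|_\infty}$ inequality. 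Since $\tilde{A}=\tau A$, the claimed bound $\sigma_{\max}(\mathcal{M}) \le 2 + \tau\|A\|_2$ follows.

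For the lower bound, it is equivalent to prove $\|\mathcal{M}^{-1}\|_2 \le M$, i.e.\ $\|\bm u\|_2 \le M\|\bm b\|_2$ whenever $\mathcal{M}\bm u = \bm b$. I would exploit the block lower bidiagonal structure of $\mathcal{M}$ by reading off the block recurrence
\[
\left(I_s - \tfrac{\tau}{2}A\right)\bm u^k - \left(I_s + \tfrac{\tau}{2}A\right)\bm u^{k-1} = \bm b^k,\quad k=1,\dots,M,
\]
with the convention $\bm u^0 = \bm 0$. Solving the recurrence by induction yields the closed form $\bm u^k = \sum_{j=1}^k Q_{k-j}\!\bigl(\tfrac{\tau}{2}A\bigr)\bm b^j$ with the rational symbol $Q_m(z) = (1+z)^m/(1-z)^{m+1}$. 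A short complex-analytic argument shows that for any $z$ with $\operatorname{Re}(z)\le 0$ one has $|1+z|\le|1-z|$ and $|1-z|\ge 1$, so $|Q_m(z)|\le 1$ uniformly in $m$. Applying \Cref{lem2.1} under the hypothesis $\operatorname{Re}(\lambda(A))\le 0$ then gives $\|Q_m(\tfrac{\tau}{2}A)\|_2 \le 1$, so that
\[
\|\bm u^k\|_2 \le \sum_{j=1}^{k}\|\bm b^j\|_2 \le \sqrt{M}\,\|\bm b\|_2
\]
by Cauchy--Schwarz. Squaring and summing over $k=1,\dots,M$ yields $\|\bm u\|_2 \le M\|\bm b\|_2$, i.e.\ $\sigma_{\min}(\mathcal{M}) \ge 1/M$. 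The condition number bound follows immediately: $\operatorname{cond}(\mathcal{M}) \le (2+\tau\|A\|_2)M = 2M + T\|A\|_2$, recalling $T = M\tau$.

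The only delicate step in this plan is the uniform norm bound on $Q_m(\tfrac{\tau}{2}A)$. Because \Cref{lem2.1} is phrased for matrices whose numerical range sits in the left half-plane, while the theorem only lists the eigenvalue condition $\operatorname{Re}(\lambda(A))\le 0$, one should either invoke the negative semi-definiteness already assumed in \Cref{the2.1} (which is the context in which the CN AaO system is analyzed) or restrict to normal $A$ and appeal to the spectral mapping theorem; in both cases the bound $\|Q_m(\tfrac{\tau}{2}A)\|_2 \le \sup_{\operatorname{Re}(z)\le 0}|Q_m(z)| \le 1$ goes through, and the rest of the argument is routine.
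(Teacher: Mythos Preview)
Your argument is correct, and for $\sigma_{\max}(\mathcal{M})$ it matches the paper's (implicit) reasoning exactly. For $\sigma_{\min}(\mathcal{M})$, however, you take a genuinely different route.

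The paper does not work with $\mathcal{M}^{-1}$ at all. Instead it asserts the pointwise comparison $\|\mathcal{M}\bm z\|_2 \ge \|(B_1\otimes I_s)\bm z\|_2$ for every $\bm z$, drawing on $\Re{\rm e}(\lambda(A))\le 0$ and $\lambda(B_2)>0$; this immediately gives $\sigma_{\min}(\mathcal{M})\ge\sigma_{\min}(B_1\otimes I_s)=\sigma_{\min}(B_1)$, and the remaining bound $\sigma_{\min}(B_1)\ge 1/M$ is deferred to \cite{Dolgov14phd}. Your approach instead writes out the block forward-substitution inverse, bounds each block $Q_m(\tfrac{\tau}{2}A)$ by~$1$ via \Cref{lem2.1} and the elementary inequality $|Q_m(z)|\le 1$ on the closed left half-plane, and then collects the estimates with Cauchy--Schwarz. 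Your argument is fully self-contained and makes explicit where the CN stability function enters; the paper's argument is much terser but relies on a comparison step that is only stated, not justified, and on an external reference for the final $1/M$ bound. The hypothesis mismatch you flag (eigenvalue condition versus the numerical-range condition in \Cref{lem2.1}) is real, and the paper's comparison step is subject to exactly the same objection; in both cases the clean fix is to work under the negative semi-definiteness assumption already used in \Cref{the2.1}.
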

\noindent\textbf{Proof}. Since both $\Re(\lambda(A)) \leq 0$ and $\lambda(B_2) > 0$, we may claim that
$\|\mathcal{M}{\bm z}\|_2 \geq \|(B_1\otimes I){\bm z}\|_2$ for any vector ${\bm z}$. In particular,
using also the properties of the Kronecker product, we may estimate $\sigma_{\min}(\mathcal{M})
\geq \sigma_{\min}(B_1)$. The rest of this proof is very similar to the idea in \cite{Dolgov14phd}, we omit the details. \hfill$\Box$

It is natural that the condition number depends linearly on all main properties of the system: number of time steps, time interval length, and the norm of the
space-discretized matrix. In order to solve such large-scale AaO linear systems efficiently, researchers prefer to use the preconditioned Krylov subspace solvers, which will be discussed in the next section.

\section{A PinT preconditioner and the spectra analysis}
\label{sec3}
In this section, we construct an efficient PinT preconditioner for the system \eqref{eq2.2}
and analyze the eigenvalue distribution of the preconditioned matrix.
The following assumption about the matrix $A$ is required in what follows.
\begin{assumption}
The matrix $A$ is negative semi-definite but diagonalizable. %such that its spectrum $\sigma(A)$ satisfies
\label{assumption1}
\end{assumption}
It is worth noting that the matrix $A$ resulting from the spatial semi-discretization of the operator $\mathcal{L}$ in (1.2) is usually diagonalizable. Even when a direct proof is unavailable, diagonalizability remains a rather mild assumption (in Assumption 1), since non-diagonalizable matrices form a set of measure zero in $\mathbb{C}^{N_x \times N_x}$.
%\begin{itemize}
%\item[\romannumeral1)] $\mathcal{L} = \mu \Delta + {\bm v} \cdot \nabla u$, where
%the diffusivity constant $\mu >0$ and the divergence-free velocity
%${\bm v} \in L^{\infty}(\Omega)^d$. Here $A\approx - \mu \Delta  +
%{\bm v} \cdot \nabla$, and the spectrum of $A$ heavily depends on the
%parameters $\mu$ and ${\bm v}$ \cite[Example 2.1]{Li2020};
%\item[\romannumeral2)] Set the model (\ref{eq1.1}) as Schr\"odinger equation: $\mathrm{\mathbf{i}}u_t - \Delta u
%+ V(x)u = f$ with a positive potential $V(x)$. Here, we have $\sigma(A)
%\subseteq \mathrm{\mathbf{i}}\mathbb{R}^+$;
%\end{itemize}

%%%%%%%%%%%%%%%
\subsection{The PinT preconditioner}
\label{sec3.1}
Replacing $B_1$ and $B_2$ by $\alpha$-circulant matrices
\begin{equation*}
C^{(\alpha)}_1 =
\begin{bmatrix}
	1 &  & & -\alpha \\
	-1 & 1 &  & \\
	& \ddots & \ddots & \\
	&  & -1 & 1
\end{bmatrix} \in \mathbb{R}^{N_t \times N_t}  \quad \mathrm{and} \quad
C^{(\alpha)}_2 = \frac{1}{2}
\begin{bmatrix}
	1 &  & & \alpha \\
	1 & 1 &  & \\
	& \ddots & \ddots & \\
	&  & 1 & 1
\end{bmatrix} \in \mathbb{R}^{N_t \times N_t},
\end{equation*}
respectively.
Here $\alpha \in (0,1)$ is a free parameter.
Then, we obtain our PinT preconditioner:
\begin{equation}\label{eq3.1}
\mathcal{P}_\alpha = C^{(\alpha)}_1 \otimes I_s - C^{(\alpha)}_2 \otimes \tilde{A}.
\end{equation}
%%%%%%%%\frac{1}{\tau} \left( \right)\frac{1}{\tau} \frac{1}{\tau}

Let $\mathbb{F} = \left[ \frac{\omega^{j k}}{\sqrt{N_t}}\right]_{j,k = 0}^{N_t - 1}$ be the discrete
Fourier matrix, where $\omega = e^{-2 \pi \mathrm{\mathbf{i}}/N_t},~\mathrm{\mathbf{i}} = \sqrt{-1}$.
We know that $\alpha$-circulant matrices $C^{(\alpha)}_{\ell}$ ($\ell = 1,2$) can be diagonalized as
$C^{(\alpha)}_{\ell} = V_\alpha D_{\ell} V_\alpha^{-1}$ with
\begin{equation*}
V_\alpha = \Lambda_\alpha^{-1} \mathbb{F}^* \quad \mathrm{and} \quad
D_{\ell} = \mathrm{diag}\left( \sqrt{N_t}\, \mathbb{F}\, \Lambda_\alpha\, C^{(\alpha)}_{\ell}(:,1) \right)
= \mathrm{diag}\left( \lambda_{\ell,1},\ldots,\lambda_{\ell,N_t} \right),
\end{equation*}
where $\Lambda_\alpha = \mathrm{diag}\left(1, \alpha^{\frac{1}{N_t}}, \ldots, \alpha^{\frac{N_t - 1}{N_t}} \right)$,
`*' denotes the conjugate transpose of a matrix,
$C^{(\alpha)}_{\ell}(:,1)$ is the first column of $C^{(\alpha)}_{\ell}$
and
\begin{equation}
	\lambda_{1,k} = 1 - \alpha^{\frac{1}{N_t}} e^{\frac{2 (k - 1) \pi \mathrm{\mathbf{i}}}{N_t}}, \quad
	\lambda_{2,k} = \frac{1}{2}  + \frac{1}{2} \alpha^{\frac{1}{N_t}}
	e^{\frac{2 (k - 1) \pi \mathrm{\mathbf{i}}}{N_t}}~(k = 1, \ldots, N_t).
\label{eigCx}
\end{equation}
Moreover, for $\alpha \in (0,1)$, the real part of $\lambda_{1,k}$ is positive, i.e.,
$\Re(\lambda_{1,k}) > 0$. 
Similarly, $\Re(\lambda_{2,k}) > 0$.

With these decompositions, we have
\begin{equation*}
\mathcal{P}_\alpha = \left( V_{\alpha} \otimes I_s \right)
\left( D_1 \otimes I_s - D_2 \otimes \tilde{A} \right)
\left( V_{\alpha}^{-1} \otimes I_s \right).
\end{equation*}
Thus, for a given vector $\bm{v}$, $\bm{z} = \mathcal{P}_\alpha^{-1} \bm{v}$ can be computed via the following
three steps:
\begin{equation}\label{eq3.2}
\begin{split}
& \textrm{step-(a)}\quad \bm{z}_1 = \left( V_{\alpha}^{-1} \otimes I_s \right) \bm{v}, \\
& \textrm{step-(b)}\quad \left( \lambda_{1,k} I_s - \lambda_{2,k} \tilde{A} \right) \bm{z}_{2,k} = \bm{z}_{1,k},~
k = 1, 2, \ldots, N_t, \\
& \textrm{step-(c)}\quad \bm{z} = \left( V_{\alpha} \otimes I_s \right) \bm{z}_2,
\end{split}
\end{equation}
where $\bm{z}_{\ell,k} = \bm{z}_{\ell}\left( (k - 1) N_x + 1 : k N_x \right)$ ($\ell = 1,2$)
represents the $k$-th block of $\bm{z}_{\ell}$.
%Denote $\lambda \left(\cdot \right)$ as the eigenvalues of a matrix.
Combining $\Re(\lambda_{\ell,k}) > 0$ ($\ell = 1,2$) and
Assumption \ref{assumption1},
%$\lambda \left( \tilde{A} \right) \leq 0$,
we know that $\mathcal{P}_\alpha$ is nonsingular.

In \eqref{eq3.2}, step-(a) and step-(c) can be computed efficiently via FFTs (i.e., fast Fourier transforms).
In step-(b), the complex-shifted linear systems
can be solved by using some sparse direct solvers since $A$ is generally a sparse matrix.
For example, if $\mathcal{L} = \kappa \Delta$,
the fast discrete sine transform (DST) is used in step-(b).
\begin{remark}\label{remark1}
Similar to the tricks in \cite{lin2021all,gu2021note},
we find that the complex eigenvalue/eigenvector
of $C_{\ell}^{(\alpha)}$ ($\ell = 1,2$) appear in conjugate pairs.
Thus, we only need to solve first $\left \lceil \frac{N_t + 1}{2} \right \rceil$ systems in step-(b).
The remaining systems are solved by conjugating the obtained solutions in order to reduce the computational cost.
\end{remark}
\begin{remark}\label{remark1x}
For the preconditioner $\mathcal{P}_1$ (i.e., $\alpha = 1$), its invertibility is equivalent to the invertibilities of complex matrices 
$\lambda_{1,k}I_s - \lambda_{2,k}\tilde{A}~(k = 1,2,\cdots,N_t)$. However, when $\alpha = 1$, then it obviously follows that $\lambda_{1,1} = 0$ and $\lambda_{2,1} = 1$, cf.~Eq.~\eqref{eigCx}. If the space-discretization matrix $A$ of some particular PDEs is singular, then it means that the preconditioner $\mathcal{P}_1$ becomes singular and fails to be a valid preconditioner. Unfortunately, such a difficulty has not been discussed and shown in the existing literature \cite{McDonald18,lin2021all,Liu2020,WU2021}.
%}{ } I_s \right) \bm{z}_{2,k}{\label}
%= \frac{1}{\lambda_{2,k}} \bm{z}_{1,k}, k = 1, 2, \ldots, M.
\end{remark}
%%%%%%%%%%%%%
\subsection{The spectral analysis}
\label{sec3.2}
Let
\begin{equation*}
	Q_1 = I_s - \frac{1}{2} \tilde{A} \quad \mathrm{and} \quad Q_2 = I_s + \frac{1}{2} \tilde{A}.
\end{equation*}
%$Q_1$ and $Q_2$ are invertible.
Then,
\begin{equation*}
	\mathcal{M} =
	\begin{bmatrix}
		Q_1  &  &  & \\
		-Q_2  & Q_1 &  & \\
		& \ddots & \ddots & \\
		&  & -Q_2 & Q_1
	\end{bmatrix} \quad \mathrm{and} \quad
\mathcal{P}_\alpha =
\begin{bmatrix}
	Q_1  &  &  & -\alpha Q_2\\
	-Q_2  & Q_1 &  & \\
	& \ddots & \ddots & \\
	&  & -Q_2 & Q_1
\end{bmatrix}.
\end{equation*}

From the above expressions, we can check that the matrix $\mathcal{M}$ is invertible since $Q_1$ is invertible.
%Denote $E_i = \bm{e}_i \otimes I_s$, where $\bm{e}_i$ is the $i$-th
%column of the identity matrix $I_t \in \mathbb{R}^{M \times M}$.
%By calculation, we obtain
%Using the Sherman-Morrison-Woodbury formula, the following result is true.
With the help of these expressions, the following result is true.
\begin{lemma}
The preconditioned matrix
$\mathcal{P}_\alpha^{-1} \mathcal{M}$ has $(N_t - 1) N_x$ eigenvalues equal to $1$
and $N_x$ eigenvalues equal to those of a matrix inverse $\left(I_s - \alpha J_{N_t} \right)^{-1}$,
where $J_{N_t} = \left(Q_1^{-1} Q_2 \right)^{N_t}$.
\label{lemma3.1}
\end{lemma}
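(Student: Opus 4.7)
The plan is to view $P_\alpha$ as a rank-$N$ perturbation of $\mathcal{M}$ and apply the Sherman--Morrison--Woodbury (SMW) identity. Since the two block matrices differ only in the top-right block,
\[
P_\alpha \;=\; \mathcal{M} - UV^{\top}, \qquad U \;=\; \alpha(\bm{e}_1 \otimes Q_2), \qquad V \;=\; \bm{e}_M \otimes I_s,
\]
where $\bm{e}_1,\bm{e}_M \in \mathbb{R}^{M}$ are the first and last standard basis vectors. Applying SMW and multiplying on the right by $\mathcal{M}$ yields
\[
P_\alpha^{-1}\mathcal{M} \;=\; I_{MN} + \mathcal{M}^{-1}U\,(I_N - V^{\top}\mathcal{M}^{-1}U)^{-1}\,V^{\top},
\]
which is a rank-at-most-$N$ perturbation of the identity. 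This immediately accounts for the claim that at least $(M-1)N$ eigenvalues of $P_\alpha^{-1}\mathcal{M}$ equal $1$.

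The remaining $N$ eigenvalues coincide, via the standard fact that $XY$ and $YX$ share their nonzero eigenvalues, with those of the compressed $N\times N$ matrix $V^{\top}\mathcal{M}^{-1}U\,(I_N - V^{\top}\mathcal{M}^{-1}U)^{-1}$. Thus the whole problem reduces to computing $V^{\top}\mathcal{M}^{-1}U = \alpha\,[\mathcal{M}^{-1}]_{M,1}\,Q_2$, i.e., to extracting the $(M,1)$ block of $\mathcal{M}^{-1}$.

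For that I would exploit the Kronecker factorization $\mathcal{M} = I_M\otimes Q_1 - L\otimes Q_2$, where $L$ is the $M\times M$ lower-shift matrix (so $L^M = 0$). Factoring $I_M \otimes Q_1$ to the left and expanding the truncating Neumann series gives
\[
\mathcal{M}^{-1} \;=\; \sum_{k=0}^{M-1} L^k \otimes (Q_1^{-1}Q_2)^k Q_1^{-1}.
\]
Only $k = M-1$ contributes to the $(M,1)$ block, so $[\mathcal{M}^{-1}]_{M,1} = (Q_1^{-1}Q_2)^{M-1}Q_1^{-1}$ and hence $V^{\top}\mathcal{M}^{-1}U = \alpha\,(Q_1^{-1}Q_2)^M = \alpha J_M$. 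If $\lambda$ is any eigenvalue of $J_M$, the compressed matrix has eigenvalue $\alpha\lambda/(1-\alpha\lambda)$, and adding $1$ produces $1/(1-\alpha\lambda)$, which is precisely an eigenvalue of $(I_s - \alpha J_M)^{-1}$.

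The main obstacle is the invertibility that SMW requires: one must verify $I_N - \alpha J_M$ is nonsingular before the formula can be invoked. Under \Cref{assumption1}, $A$ is negative semi-definite, and since the rational map $z\mapsto(1+z/2)/(1-z/2)$ sends the closed left half-plane into the closed unit disk (cf.~\Cref{lem2.1}), the eigenvalues of $Q_1^{-1}Q_2$, and hence of $J_M$, lie in the closed unit disk; combined with $\alpha\in(0,1)$ this gives $|\alpha\lambda|<1$ and the required invertibility. The remaining calculations are routine once the Kronecker--Neumann viewpoint is in place; a minor further subtlety is that the eigenvalue count should be read as a multiset identity, since some of the $N$ eigenvalues of $(I_s - \alpha J_M)^{-1}$ may coincidentally equal $1$.
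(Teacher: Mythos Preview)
Your proof is correct and takes a genuinely different route from the paper. The paper explicitly \emph{avoids} SMW, remarking immediately after its proof that ``the Sherman--Morrison--Woodbury formula fails in our case'' because $Q_2$ may be singular; instead it computes $\mathcal{M}^{-1}P_\alpha$ directly as a block upper-triangular matrix via the factorization $\mathcal{M}=(I_t\otimes Q_1)\tilde{\mathcal{M}}$ and then inverts, reading eigenvalues off the block diagonal. Your choice $U=\alpha(\bm{e}_1\otimes Q_2)$, $V=\bm{e}_M\otimes I_s$ sidesteps the alleged obstruction: the capacitance matrix in your SMW identity is $I_s-\alpha J_M$, whose invertibility requires only $\alpha\in(0,1)$ and $|\lambda(Q_1^{-1}Q_2)|\le 1$, not the invertibility of $Q_2$ itself. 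In effect you show that SMW does \emph{not} actually fail here---only the particular variant (presumably from McDonald et al.) that needs $Q_2^{-1}$ does. Both arguments ultimately rest on the same Neumann-series form of $\mathcal{M}^{-1}$; yours is slightly more economical in that it extracts only the single block $[\mathcal{M}^{-1}]_{M,1}$ and packages the eigenvalue count via the $XY\leftrightarrow YX$ spectrum identity, while the paper writes out the full block-triangular inverse. A minor organizational difference: the paper defers the invertibility of $I_s-\alpha J_M$ to the subsequent \Cref{lemma3.3}, whereas you establish it within the argument.
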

\begin{proof}
It is difficult to calculate $\mathcal{P}_\alpha^{-1} \mathcal{M}$ directly.
Thus, we first calculate $\mathcal{M}^{-1} \mathcal{P}_\alpha$.
We rewrite $\mathcal{M}$ and $\mathcal{P}_\alpha$ into the form
$\mathcal{M} =  \left(I_t \otimes Q_1 \right) \tilde{\mathcal{M}}$
and $\mathcal{P}_\alpha = \left(I_t \otimes Q_1 \right) \tilde{\mathcal{P}_\alpha}$,
where $I_t$ is an identity matrix of size $N_t$,
\begin{equation*}
\tilde{\mathcal{M}} =
\begin{bmatrix}
	I_s  &  &  & \\
	-Q_1^{-1} Q_2  & I_s &  & \\
	& \ddots & \ddots & \\
	&  & -Q_1^{-1} Q_2 & I_s
\end{bmatrix}, \quad
\tilde{\mathcal{P}_\alpha} =
\begin{bmatrix}
	I_s  &  &  & -\alpha Q_1^{-1} Q_2\\
	-Q_1^{-1} Q_2  & I_s &  & \\
	& \ddots & \ddots & \\
	&  & -Q_1^{-1} Q_2 & I_s
\end{bmatrix}.
\end{equation*}
By simple calculation, we have
\begin{equation*}
\tilde{\mathcal{M}}^{-1} =
\begin{bmatrix}
	I_s  &  &  & \\
	J_1  & I_s &  & \\
	\vdots & \ddots & \ddots & \\
	J_{N_t - 1} & \cdots & J_1 & I_s
\end{bmatrix},
\end{equation*}
where $J_k = \left(Q_1^{-1} Q_2 \right)^k$ ($k = 1, 2, \ldots$).
Then, we arrive at
\begin{equation}
\mathcal{M}^{-1} \mathcal{P}_\alpha = \tilde{\mathcal{M}}^{-1} \tilde{\mathcal{P}_\alpha} =
\begin{bmatrix}
	I_s & \bm{0} & \cdots  & \bm{0} & -\alpha J_1 \\
	& I_s & \ddots & \ddots  & -\alpha J_2 \\
	&  & \ddots & \ddots  & \vdots \\
	&  &  &  I_s & -\alpha J_{N_t - 1} \\
	&  &  &  & I_s - \alpha J_{N_t}
\end{bmatrix}.
\end{equation}
With this at hand, we obtain
\begin{equation*}
\mathcal{P}_\alpha^{-1} \mathcal{M} = \left( \mathcal{M}^{-1} \mathcal{P}_\alpha \right)^{-1}
= \left( \tilde{\mathcal{M}}^{-1} \tilde{\mathcal{P}_\alpha} \right)^{-1} =
\begin{bmatrix}
	I_s & \bm{0} & \cdots  & \bm{0} & \alpha J_1 \left(I_s - \alpha J_{N_t}\right)^{-1} \\
	& I_s & \ddots & \ddots  & \alpha J_2 \left(I_s - \alpha J_{N_t}\right)^{-1} \\
	&  & \ddots & \ddots  & \vdots \\
	&  &  &  I_s & \alpha J_{N_t - 1} \left(I_s - \alpha J_{N_t}\right)^{-1} \\
	&  &  &  & \left(I_s - \alpha J_{N_t}\right)^{-1}
\end{bmatrix}.
\label{P^-1M}
\end{equation*}	
The proof is completed.
\end{proof}
It is worth mentioning that this proof is different from \cite{McDonald18}
since the Sherman-Morrison-Woodbury formula fails in our case.
The reason is that the matrix $Q_2$ may be singular.
Recall the proof of Lemma \ref{lemma3.1}, the invertibility of $I_s - \alpha J_{N_t}$
is not proved. Fortunately, it can be obtained from the following lemma.
\begin{lemma}
The eigenvalues of $Q_1^{-1} Q_2$ satisfy
\begin{equation*}
	-1 < \Re \left( \lambda \left(Q_1^{-1} Q_2 \right) \right) \leq 1 \quad
	\mathrm{and}\quad
	0 \leq \left| \lambda \left(Q_1^{-1} Q_2 \right) \right| \leq 1.
\end{equation*}
% where $\Re(\cdot)$ is the real part of a complex number.
\label{lemma3.2}
\vspace{-2.5em}
\end{lemma}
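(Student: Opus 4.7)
The plan is to reduce the eigenvalue statement for the matrix $Q_1^{-1}Q_2$ to a pointwise inequality for the Cayley transform $g(z) = (1+z)/(1-z)$ evaluated on the closed left half-plane. The reduction is immediate from \Cref{assumption1}: writing $A = S\Lambda S^{-1}$ with $\Lambda = \mathrm{diag}(\lambda_1,\ldots,\lambda_N)$, the matrices $Q_1 = S(I_s - \tfrac{\tau}{2}\Lambda)S^{-1}$ and $Q_2 = S(I_s + \tfrac{\tau}{2}\Lambda)S^{-1}$ are simultaneously diagonalizable, so the eigenvalues of $Q_1^{-1}Q_2$ are exactly $\mu_j = g(z_j)$ with $z_j = \tau\lambda_j/2$. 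A short argument handles the location of the $\lambda_j$'s: for any eigenpair $(\lambda_j,v_j)$ of $A$, negative semi-definiteness of $A$ gives $\Re(\lambda_j)\|v_j\|^2 = \Re\langle v_j, A v_j\rangle \leq 0$, so $\Re(z_j)\leq 0$, and in particular $z_j\neq 1$, making $g(z_j)$ well defined.

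Next I would prove the modulus bound. For $z = x + iy$ with $x \leq 0$, a direct expansion gives $|1-z|^2 - |1+z|^2 = -4x \geq 0$, so $|g(z)| = |1+z|/|1-z| \leq 1$; nonnegativity of $|g(z)|$ is automatic. For the real-part bound, I would compute
\begin{equation*}
g(z) = \frac{(1+z)\,\overline{(1-z)}}{|1-z|^2} = \frac{1 - |z|^2 + 2\mathbf{i}\,\Im(z)}{|1-z|^2},
\end{equation*}
so $\Re(g(z)) = (1-|z|^2)/|1-z|^2$. Then $\Re(g(z)) \leq 1$ is equivalent to $\Re(z) \leq |z|^2$, which is trivially true since $\Re(z)\leq 0 \leq |z|^2$, and $\Re(g(z)) > -1$ is equivalent to $\Re(z) < 1$, which holds strictly from $\Re(z)\leq 0$. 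Combining with $\mu_j = g(z_j)$ yields both claimed bounds.

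There is no real obstacle here; the only subtlety worth flagging is that \Cref{assumption1} uses the operator (Hermitian-form) notion of negative semi-definiteness rather than assuming $A$ is symmetric, so the implication $\Re(\lambda(A))\leq 0$ has to be argued from the quadratic form, as done above. Once this is observed, everything reduces to elementary arithmetic on $g(z)$, and both inequalities emerge in parallel from the single identity for $(1+z)\overline{(1-z)}$.
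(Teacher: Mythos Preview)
Your proof is correct and follows essentially the same route as the paper: both reduce the eigenvalues of $Q_1^{-1}Q_2$ to the Cayley transform $g(z)=(1+z)/(1-z)$ evaluated at $z=\tfrac{\tau}{2}\lambda(A)$ with $\Re(z)\le 0$, and then verify the real-part and modulus bounds by direct algebra. Your presentation is somewhat tidier---using the single identity $(1+z)\overline{(1-z)}=1-|z|^2+2\mathbf{i}\,\Im(z)$ to read off both bounds---and you make explicit the passage from the quadratic-form definition of negative semi-definiteness to $\Re(\lambda(A))\le 0$, which the paper leaves implicit; but the underlying argument is the same.
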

\begin{proof}
% From Assumption \ref{assumption1}, it is no difficult to get that
% \begin{equation*}
% \lambda \left(Q_1^{-1} Q_2 \right) = \frac{\lambda \left(Q_2 \right)}{\lambda \left(Q_1 \right)}
% = \frac{1 + \frac{\tau}{2} \lambda \left(A \right)}{1 - \frac{\tau}{2} \lambda \left(A \right)}.
% \end{equation*}
We know that all the eigenvalues of $A$ can be written in the form
$\lambda \left(A \right) = \eta + \xi \mathrm{\mathbf{i}}$.
Noticing Assumption \ref{assumption1}, we have $\eta,\xi \in \mathbb{R}$ and $\eta \leq 0$.
Then, it is not difficult to get that
\begin{equation*}
\lambda \left(Q_1^{-1} Q_2 \right) 
= \frac{\lambda \left(Q_2 \right)}{\lambda \left(Q_1 \right)}
=\frac{1 + \frac{\tau}{2} \lambda \left(A \right)}{1 - \frac{\tau}{2} \lambda \left(A \right)}
= \frac{1 - \left(\frac{\tau}{2}\eta\right)^2 - \left(\frac{\tau}{2}\xi\right)^2}
{\left(1 - \frac{\tau}{2}\eta \right)^2 + \left(\frac{\tau}{2}\xi \right)^2}
+ \frac{\tau \xi}
{\left(1 - \frac{\tau}{2}\eta \right)^2 + \left(\frac{\tau}{2}\xi \right)^2} \mathrm{\mathbf{i}}.
\end{equation*}
We first estimate the real part of it. Obviously,
\begin{equation*}
\Re \left( \lambda \left(Q_1^{-1} Q_2 \right) \right)
=\frac{1 - \left(\frac{\tau}{2}\eta\right)^2 - \left(\frac{\tau}{2}\xi\right)^2}
{\left(1 - \frac{\tau}{2}\eta \right)^2 + \left(\frac{\tau}{2}\xi \right)^2}
= -1 + 	\frac{2 - \tau \eta}
{\left(1 - \frac{\tau}{2}\eta \right)^2 + \left(\frac{\tau}{2}\xi \right)^2} > -1.	
\end{equation*}
On the other hand, we obtain
\begin{equation*}
\left(1 - \frac{\tau}{2}\eta \right)^2 + \left(\frac{\tau}{2}\xi \right)^2 -
\left[ 1 - \left(\frac{\tau}{2}\eta\right)^2 - \left(\frac{\tau}{2}\xi\right)^2 \right]
= -\tau \eta + \frac{\tau^2}{2}\eta^2 + \frac{\tau^2}{2}\xi^2 \geq 0.
\end{equation*}
Thus, we arrived at
\begin{equation*}
-1 < \Re\left( \lambda \left(Q_1^{-1} Q_2 \right) \right) \leq 1.
\end{equation*}
Next, we turn to show the upper bound of $\left| \lambda \left(Q_1^{-1} Q_2 \right) \right| $.
By some calculations, we have
\begin{equation*}
%\begin{split}
\left[ 1 - \left(\frac{\tau}{2}\eta \right)^2 - \left(\frac{\tau}{2} \xi \right)^2 \right]^2
+ \left(\tau \xi \right)^2 - \left[ \left(1 - \frac{\tau}{2} \eta \right)^2 + \left(\frac{\tau}{2} \xi \right)^2 \right]^2 
=2 \tau \eta \left(\frac{\tau^2}{4}\eta^2 + \frac{\tau^2}{4} \xi^2  - \tau \eta + 1\right)
\leq 0.
%\end{split}
\end{equation*}
This implies that $\left| \lambda \left(Q_1^{-1} Q_2 \right) \right| \leq 1$.
\end{proof}
With the help of Lemma \ref{lemma3.2},
we can immediately show the nonsingularity of $\left(I_s - \alpha J_{N_t}\right)$.
\begin{lemma}
Assume $\alpha \in (0,1)$. The matrix $\left(I_s - \alpha J_{N_t}\right)$ is nonsingular
and its eigenvalues satisfy
\begin{equation*}
1 - \alpha\leq \Re\left( \lambda \left( I_s - \alpha J_{N_t} \right) \right) \leq 1+\alpha
\quad \mathrm{and}\quad
1 - \alpha \leq \left| \lambda \left(I_s - \alpha J_{N_t} \right) \right| \leq 1 + \alpha.
\end{equation*}
\label{lemma3.3}
\vspace{-2em}
\end{lemma}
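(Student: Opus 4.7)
The plan is to read off the spectrum of $I_s-\alpha J_M$ from the spectrum of $Q_1^{-1}Q_2$, which is already controlled by \Cref{lemma3.2}. Since $J_M=(Q_1^{-1}Q_2)^M$, any eigenvalue of $J_M$ is of the form $\mu^M$ where $\mu$ is an eigenvalue of $Q_1^{-1}Q_2$; consequently any eigenvalue of $I_s-\alpha J_M$ has the form $1-\alpha\mu^M$. Setting $\nu:=\mu^M$, \Cref{lemma3.2} gives $|\nu|=|\mu|^M\leq 1$, and this is the only fact about $\nu$ that I will actually need.

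First I would handle the modulus bound. By the triangle inequality and its reverse form,
\begin{equation*}
1-\alpha \;\leq\; 1-\alpha|\nu| \;\leq\; |1-\alpha\nu| \;\leq\; 1+\alpha|\nu| \;\leq\; 1+\alpha,
\end{equation*}
which yields $1-\alpha\leq|\lambda(I_s-\alpha J_M)|\leq 1+\alpha$. Since $\alpha\in(0,1)$, the lower bound is strictly positive, so $I_s-\alpha J_M$ has no zero eigenvalue and is therefore nonsingular.

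Next I would address the real-part bound. Writing $\nu=a+b\mathrm{\mathbf{i}}$ with $a,b\in\mathbb{R}$, we have $|a|\leq|\nu|\leq 1$, so $-1\leq a\leq 1$, and
\begin{equation*}
\Re{\rm e}(1-\alpha\nu)=1-\alpha a\in[1-\alpha,\,1+\alpha],
\end{equation*}
as desired. Strictly speaking, \Cref{lemma3.2} only guarantees $\Re{\rm e}(\mu)>-1$ rather than $\Re{\rm e}(\mu)\geq -1$, but after raising to the $M$-th power all I need is the trivial bound $|\Re{\rm e}(\nu)|\leq|\nu|\leq 1$, so this presents no issue.

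There is no real obstacle here: the whole argument is a one-line consequence of $|\lambda(J_M)|\leq 1$, which in turn follows from the modulus bound in \Cref{lemma3.2} together with the observation that $J_M$ is the $M$-th power of $Q_1^{-1}Q_2$. The only point that deserves a brief comment in the write-up is that the bounds on $|\nu|$ (not on $\Re{\rm e}(\nu)$ or $|\mu|$ alone) are what drives both inequalities.
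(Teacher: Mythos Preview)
Your proof is correct and follows essentially the same approach as the paper: both read off $\lambda(I_s-\alpha J_M)=1-\alpha\mu^M$ with $|\mu|\leq 1$ from \Cref{lemma3.2}, then bound the real part and modulus directly. The only cosmetic differences are that the paper writes $\mu$ in polar form and invokes De Moivre for the real-part bound, and obtains the lower modulus bound via $|\lambda|\geq|\Re{\rm e}(\lambda)|\geq 1-\alpha$ rather than the reverse triangle inequality you use.
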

\begin{proof}
The eigenvalues of $\left(I_s - \alpha J_{N_t}\right)$ can be expressed as
\begin{equation*}
\lambda \left(I_s - \alpha J_{N_t} \right) =
1 - \alpha \left[\lambda \left( Q_1^{-1} Q_2 \right)\right]^{N_t}.
\end{equation*}
Based on Lemma \ref{lemma3.2}, we rewrite the (complex) eigenvalues of $Q_1^{-1} Q_2$ in the polar form:
$\lambda \left( Q_1^{-1} Q_2 \right) = r \left(\cos \theta + \mathrm{\mathbf{i}}  \sin \theta \right)$,
where $0 \leq r \leq 1$ and $\theta \in (-\pi, \pi)$.
With the help of De Moivre's formula, we have
\begin{equation*}
0 < 1 - \alpha \leq \Re\left( \lambda \left( I_s - \alpha J_{N_t} \right) \right)
= 1 - \alpha r^{N_t} \cos \left(N_t \theta \right) \leq 1 + \alpha.
\end{equation*}
This also implies that the matrix $\left(I_s - \alpha J_{N_t}\right)$ is nonsingular.

Using Lemma \ref{lemma3.2}, we obtain
\begin{equation*}
1 - \alpha \leq \left| \Re\left( \lambda \left( I_s - \alpha J_{N_t} \right) \right) \right|
\leq \left| \lambda \left(I_s - \alpha J_{N_t} \right) \right|
\leq 1 + \alpha \left| \lambda \left(Q_1^{-1} Q_2 \right) \right|^{N_t} \leq 1 + \alpha.
\end{equation*}
This completes the proof.
\end{proof}
Combining Lemma \ref{lemma3.1} and Lemma \ref{lemma3.3},
we get the spectral distribution of $\mathcal{P}_\alpha^{-1} \mathcal{M}$.
\begin{theorem}
\label{thm3.1}
Assume $\alpha \in (0,1)$ and the spatial matrix $A$ fulfills Assumption \ref{assumption1}.
Then, the preconditioned matrix
$\mathcal{P}_\alpha^{-1} \mathcal{M}$ has $(N_t - 1) N_x$ eigenvalues equal to $1$
and $N_x$ eigenvalues distributed in the annulus
\begin{equation*}
\Omega_{\alpha} = \left\{ z \in \mathbb{C}: \frac{1}{1 + \alpha} \leq \left| z \right|
\leq \frac{1}{1 - \alpha}~ \mathrm{and}~ 
%\frac{1 - \alpha}{(1 + \alpha)^2} \leq 
\Re\left(z \right) >0
%\leq \frac{1}{1-\alpha} 
\right\}.
\end{equation*}
\label{th3.1}
\end{theorem}
\begin{proof}
Lemma \ref{lemma3.1} implies 
\begin{equation*} 
\sigma( \mathcal{P}_\alpha^{-1} \mathcal{M} ) 
=\underbrace{\{1,1,\ldots,1\}}_{(N_t -1)N_x} \cup \, \sigma(\left(I_s - \alpha J_{N_t} \right)^{-1}),
\end{equation*}
so  we only need to show the distribution of the eigenvalues of
$\left(I_s - \alpha J_{N_t} \right)^{-1}$.
% According to Lemma \ref{lemma3.1}, we only need to show the distribution of the eigenvalues of $\left(I_s - \alpha J_M \right)^{-1}$.

From Lemma \ref{lemma3.3}, we know
\begin{equation*}
\frac{1}{1 + \alpha} \leq \left| \lambda \left( \left(I_s - \alpha J_{N_t} \right)^{-1} \right) \right|
	\leq \frac{1}{1 - \alpha}
\end{equation*}
and $\Re\left(\lambda \left( \left(I_s - \alpha J_{N_t} \right)^{-1}\right) \right)>0$,
%\right|^2} \leq {(1-\alpha)},
which completes the proof.
\end{proof}
%\begin{remark}
%In fact, we can set $ \lambda\left(I_s - \alpha J_M \right) = \tilde{r}(\cos\phi + \mathrm{\mathbf{i}}\sin\phi)$
%with $\tilde{r}\in[1 - \alpha, 1 + \alpha]$ and the angle
%$\phi\in(-\pi/2,\pi/2)$
%\begin{equation*}
%\Re{\rm e}\left( \lambda \left( \left(I_s - \alpha J_M \right)^{-1} \right) \right)
%= \Re{\rm e}\left(\frac{1}{\tilde{r}}\left(\cos\phi - \mathrm{\mathbf{i}}\sin\phi\right)
%\right)=\frac{\cos\phi}{\tilde{r}} \leq\frac{1}{1-\alpha}
%\end{equation*}
%and
%\begin{equation*}
%\Re{\rm e}\left( \lambda \left( \left(I_s - \alpha J_M \right)^{-1} \right) \right)
%= \frac{\Re{\rm e}\left( \lambda\left(I_s - \alpha J_M \right) \right)}
%{|\lambda\left(\left(I_s - \alpha J_M\right)\right)|^{2}}\geq \frac{1 -
%\alpha}{(1 + \alpha)^{2}} > 0,
%\frac{1}{1-\alpha}\phi
%\end{equation*}
%which implies that we can further estimate the real part of the preconditioned matrix $P^{-1}_{\alpha}\mathcal{M}$, but such a low bound is not sharped.
%\end{remark}

Moreover, if we tighten Assumption \ref{assumption1} as that the spatial matrix $A$ is symmetric negative semi-definite, then we can improve the above theorem as follows.
\begin{corollary}
If the spatial matrix $A$ is symmetric negative semi-definite and $\alpha\in(0,1)$, then the preconditioned matrix
$\mathcal{P}_\alpha^{-1} \mathcal{M}$ has $(N_t - 1) N_x$ eigenvalues equal to $1$ and $N_x$ eigenvalues distributed in
the interval $\left[\frac{1}{1+\alpha}, \frac{1}{1-\alpha}\right]$.
\end{corollary}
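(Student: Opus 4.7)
The plan is to specialize \Cref{th3.1} using the extra symmetry. Since $A$ is symmetric negative semi-definite, it is orthogonally diagonalizable with real, non-positive eigenvalues, so \Cref{assumption1} holds and \Cref{lemma3.1} still applies. This immediately gives the $(M-1)N$ unit eigenvalues, and reduces the problem to analyzing the spectrum of $(I_s-\alpha J_M)^{-1}$ with $J_M=(Q_1^{-1}Q_2)^M$.

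Next I would exploit that $Q_1=I_s-\tfrac{\tau}{2}A$ and $Q_2=I_s+\tfrac{\tau}{2}A$ are simultaneously diagonalizable by the same orthogonal transform that diagonalizes $A$. Writing $\lambda(A)=\eta\le 0$ (so now $\xi=0$ in the notation of \Cref{lemma3.2}), one has
\begin{equation*}
\lambda(Q_1^{-1}Q_2)=\frac{1+\tfrac{\tau}{2}\eta}{1-\tfrac{\tau}{2}\eta}\in(-1,1].
\end{equation*}
Consequently $\lambda(J_M)=\bigl[\lambda(Q_1^{-1}Q_2)\bigr]^M\in[-1,1]$, purely real. The strict real-part inequality in \Cref{lemma3.2} collapses to an easy real statement here, bypassing the need for the polar-form argument.

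From there, $\lambda(I_s-\alpha J_M)=1-\alpha\lambda(J_M)\in[1-\alpha,\,1+\alpha]\subset(0,\infty)$, so $I_s-\alpha J_M$ is nonsingular and its inverse has real positive eigenvalues in $\bigl[\tfrac{1}{1+\alpha},\tfrac{1}{1-\alpha}\bigr]$. Combining with \Cref{lemma3.1} yields the claimed distribution.

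I do not anticipate a real obstacle: every ingredient has already been proven in the excerpt, and symmetry of $A$ just removes the imaginary parts throughout. The only care needed is the small bookkeeping that raising real numbers in $(-1,1]$ to the $M$-th power still lies in $[-1,1]$ (and remains real), which is immediate; no additional lemma is required.
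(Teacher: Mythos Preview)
Your proposal is correct and follows essentially the same approach as the paper: both arguments exploit that symmetry of $A$ makes $Q_1^{-1}Q_2$ (hence $J_M$ and $I_s-\alpha J_M$) have real spectrum in the appropriate interval, then invert to land in $[\tfrac{1}{1+\alpha},\tfrac{1}{1-\alpha}]$ and invoke \Cref{lemma3.1}. The only cosmetic difference is that the paper cites \Cref{lemma3.2} and \Cref{lemma3.3} for the interval $[1-\alpha,1+\alpha]$, whereas you recompute it directly from the scalar formula $\frac{1+\tau\eta/2}{1-\tau\eta/2}\in(-1,1]$.
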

\begin{proof}
Since the spatial matrix $A$ is symmetric negative semi-definite, both the matrices $Q_1$ and $Q_2$ are
real symmetric, then $I_s - \alpha J_{N_t}$ will be real symmetric and all its (real) eigenvalues will be located at
the interval $[1-\alpha,1+\alpha]$ according to Lemma \ref{lemma3.2} and Lemma \ref{lemma3.3}. Therefore, each eigenvalue $z\in\mathbb{R}$ of
the matrix $(I_s - \alpha J_{N_t})^{-1}$ satisfies that $z\in\left[\frac{1}{1+\alpha}, \frac{1}{1-\alpha}\right]$,
then we complete the proof with the help of Theorem \ref{th3.1}.
\end{proof}
%%%%%%%%%%%%%%%%%%%%%%%
\begin{remark}
Compared with the stability condition required by the Crank-Nicolson scheme (cf.~Theorem \ref{the2.1}),
we only need to assume that the spatial matrix $A$ can be diagonalized to give our eigenvalue analysis
for the preconditioned matrix. In Theorem \ref{th3.1}, the upper bound can be reached, if the spatial
matrix $A$ has some zero eigenvalues; refer to Section \ref{sec5} for a discussion.
\label{remark2x}
\end{remark}
%
%\begin{remark}
%When we consider the model (1.1) as an advection equation $u_t = u_x$ with suitable initial and boundary
%value conditions, then the spatial matrix $A$ via the central difference approximation will be skew-symmetric
%and it is worth noting that our eigenvalue analysis for the preconditioned matrix is still available (i.e.,
%when $\eta\equiv 0$ and $\cos\theta \equiv 0$ in the proof of Lemmas 2.2-2.3, respectively). The previous
%studies fail to consider and analyze such an interesting case.
%
%

\section{Convergence rate of the preconditioned GMRES method}
\label{sec4}
Let ${\bm u}_k$ denote the approximate solution at the $k$-th iteration of GMRES applied to the
preconditioned system $\mathcal{P}^{-1}_{\alpha}\mathcal{M}{\bm u}_k = \mathcal{P}^{-1}_{\alpha}
{\bm b}$ with a given initial guess ${\bm u}_0$. Then the residual vector ${\bm r}_k:= \mathcal{
P}^{-1}_{\alpha} ({\bm b} - \mathcal{M}{\bm u}_k)$ satisfies the minimal residual property \cite{saad2003iterative}
\begin{equation}
\|{\bm r}_k\|_2 = \min_{p\in \mathbb{P}_k, p(0)=1}\|p(\mathcal{P}^{-1}_{\alpha}\mathcal{M})
{\bm r}_0\|_2\leq \left(\min_{p\in \mathbb{P}_k, p(0)=1}\| p(\mathcal{P}^{-1}_{\alpha} \mathcal{M})\|_2\right)
\|{\bm r}_0\|_2,
\label{sss2x}
\end{equation}
where $\mathbb{P}_k$ is the set of polynomials of degree $k$ or less. For a given initial ${\bm r}_0$,
the last ``worst-case" inequality can be highly overestimated in some cases \cite{titley2014,embree2022}. It is usually
difficult to directly estimate the norm $\|p(\mathcal{P}^{-1}_{\alpha} \mathcal{M})\|_2$ from only
the spectral information of the nonnormal matrix $\mathcal{P}^{-1}_{\alpha}\mathcal{M}$. However, if
$\mathcal{P}^{-1}_{\alpha}\mathcal{M}$ is diagonalizable, then the above estimate (\ref{sss2x}) can be
improved. In fact, we have the following conclusion,
\begin{theorem}
\label{thm4.1}
Assume $\alpha\in(0,1)$ and the spatial matrix $A$ fulfills Assumption \ref{assumption1}. The preconditioned matrix $\mathcal{P}^{-1}_{\alpha}\mathcal{M}$ is diagonalizable for any $\alpha \in (0,1)$.
\end{theorem}
\begin{proof}
We first claim that the matrix $I_s - \alpha J_{N_t}$ is diagonalizable. Due to Assumption \ref{assumption1},
the eigendecomposition of $\tilde{A}$ is given as $\tilde{A} = V_aD_aV^{-1}_a$ and then
\begin{equation}
Q^{-1}_1Q_2 = V_a\left(I_s - \frac{1}{2}D_a\right)^{-1}V^{-1}_aV_a\left(I_s + \frac{1}{2}D_a\right)V^{-1}_a:=V_aD_{J}V^{-1}_a
\label{eq4.2x}
\end{equation}
with $D_J := \left(I_s - \frac{1}{2}D_a\right)^{-1}\left(I_s + \frac{1}{2}D_a\right)$ being a diagonal
matrix whose diagonal entries are the eigenvalues of $Q^{-1}_1Q_2$. Moreover, we have the following matrix diagonalization
\begin{equation}
\begin{split}
I_s - \alpha J_{N_t} & = I_s - \alpha (Q^{-1}_1Q_2)^{N_t}
= V_a[I_s - \alpha (D_{J})^{N_t}]V^{-1}_a
\end{split}
\end{equation}
and $(I_s - \alpha J_{N_t})^{-1} - I_s $ can be factorized as
\begin{equation}
(I_s - \alpha J_{N_t})^{-1} - I_s = V_a[(I_s - \alpha (D_{J})^{N_t})^{-1} - I_s]V^{-1}_a:=V_a\Theta V^{-1}_a.
\end{equation}
By the above expression of $ \mathcal{P}_{\alpha}^{-1} \mathcal{M} $ in \eqref{P^-1M}, a tedious but simple calculation yields the following diagonalization:
\begin{equation*}
\mathcal{P}_\alpha^{-1} \mathcal{M} - \mathcal{I} =
\underbrace{\begin{bmatrix}
	I_s & \bm{0} & \cdots  & \bm{0} & \alpha J_1V_a  \\
	& I_s & \ddots & \ddots  & \alpha J_2V_a  \\
	&  & \ddots & \ddots  & \vdots \\
	&  &  &  I_s & \alpha J_{N_t - 1}V_a \\
	&  &  &  & V_a
\end{bmatrix}}_{\mathcal{S}}\underbrace{\begin{bmatrix}
	\bm{0}   \\
	& \bm{0}  \\
	&  & \ddots  \\
	&  &  &  \bm{0}  \\
	&  &  &  & \Theta
\end{bmatrix}}_{\widetilde{\Theta}}\underbrace{\begin{bmatrix}
	I_s & \bm{0} & \cdots  & \bm{0} & \alpha J_1 V_a\\
	& I_s & \ddots & \ddots  & \alpha J_2 V_a \\
	&  & \ddots & \ddots  & \vdots \\
	&  &  &  I_s & \alpha J_{N_t - 1} V_a\\
	&  &  &  & V_a
\end{bmatrix}^{-1}}_{\mathcal{S}^{-1}},
\end{equation*}	
which leads to the desired diagonalization
\begin{equation}
\mathcal{P}_\alpha^{-1} \mathcal{M}  = \mathcal{S}(\mathcal{I} + \widetilde{\Theta})\mathcal{S}^{-1}:=\mathcal{S}\Upsilon\mathcal{S}^{-1},
\end{equation}
where the diagonal entries of $\Upsilon$ consist of the spectrum set $\sigma(\mathcal{P}^{-1}_{\alpha}\mathcal{M})$.
\end{proof}

According to the above theorem, it is natural to obtain a convenient upper bound
\begin{equation}
\begin{split}
\|p(\mathcal{P}^{-1}_{\alpha}\mathcal{M})\|_2 & =
\|\mathcal{S}p(\Upsilon)\mathcal{S}^{-1}\|_2 \\
& \leq \|\mathcal{S}\|_2\|p(\Upsilon)\|_2\|\mathcal{S}^{-1}\|_2 \\
& = \kappa(\mathcal{S})\left(\max_{\lambda \in \sigma(\mathcal{P}^{-1}_{\alpha} \mathcal{M})}|p(\lambda)|\right),
\end{split}
\end{equation}
where $\kappa(\mathcal{S}):=\|\mathcal{S}\|_2\|\mathcal{S}^{-1}\|_2$
denotes the 2-norm condition number of eigenvector matrix $\mathcal{S}$
and $\sigma(\mathcal{P}^{-1}_{\alpha}\mathcal{M})$ is explicitly given in
Theorem \ref{th3.1}. In particular, this bound indicates that a highly clustered
spectrum away from zero will likely give a fast convergence rate if $\kappa(\mathcal{S})$ is not too large. Following the arguments in \cite[Theorem 3]{McDonald18},
we have already proven that $\mathcal{P}^{-1}_{\alpha}\mathcal{M}$ is diagonalizable,
and hence the above convergence bound is indeed applicable. Nevertheless, it is
still difficult to estimate $\kappa(\mathcal{S})$, especially when
we just make the spatial matrix $A$ fulfill Assumption \ref{assumption1}. This phenomenon motivates us to
use an alternative approach to obtain a different convergence rate bound, as explained
in the next section.
%which motivates us to use an alternative
%approach to obtain a different convergence rate bound.

%On the other hand, if we let the spatial matrix $A$ be symmetric positive
%semi-definite, then we can propose a different and improved convergence rate bound.
In fact, for any matrix $Z\in\mathbb{R}^{n\times n}$, denote $\mathcal{H}(Z) = \frac{Z + Z^{\top}}{2}$ as
the symmetric part of $Z$. According to the convergence bound of preconditioned restarted GMRES \cite{Eisenstat83,Elman1982phd}
for nonsymmetric but positive definite systems, we have the following relative residual norm
estimate:
\begin{equation}
\frac{\|{\bm r}_m^{(k)} \|_2}{\|{\bm r}_0\|_2}\leq
\left[1 - \frac{\lambda^{2}_{\min}(\mathcal{H}(\mathcal{P}^{-1}_{\alpha}\mathcal{M}))}
{\|\mathcal{P}^{-1}_{\alpha}\mathcal{M}\|^{2}_2}\right]^{km/2},
\label{eq4.7}
\end{equation}
where $m$ indicates that GMRES is restarted after every $m$ iterations, and ${\bm r}_m^{(k)}$ is the residual vector after $k$-th restart of the GMRES($m$).
We highlight that such a (possibly nonsharp) convergence rate requires the symmetric part
of the preconditioned matrix (i.e., $\mathcal{H}(\mathcal{P}^{-1}_{\alpha}\mathcal{M})$) to be
positive definite, which will be shown in the next context.

At this stage, since
\begin{equation}
\mathcal{P}_{\alpha} = \mathcal{M} - \alpha\begin{bmatrix}
	\bm{0}   & \cdots&  \bm{0}& Q_2\\
	 & \bm{0} & \cdots & \bm{0}\\
	&  & \ddots & \vdots\\
	&  &  & \bm{0}
\end{bmatrix}:=\mathcal{M} - \alpha\mathcal{R},
\end{equation}
then we have
\begin{equation}
\|\mathcal{P}^{-1}_{\alpha}\mathcal{M}\|_2 = \|\mathcal{I} + \alpha\mathcal{P}^{-1}_{\alpha}\mathcal{R}\|_2
\leq 1 + \alpha\|\mathcal{P}^{-1}_{\alpha}\mathcal{M}\|_2\|\mathcal{M}^{-1}\mathcal{R}\|_2.
\label{eq4.9}
\end{equation}
The application of \eqref{eq4.7} hinges on estimating 
$ \lambda_{\min}(\mathcal{H}(\mathcal{P}^{-1}_{\alpha}\mathcal{M})) $ 
and $ \|\mathcal{P}^{-1}_{\alpha}\mathcal{M}\|_2 $. To achieve this, we first estimate the upper bound of the norm $\|\mathcal{M}^{-1}\mathcal{R}\|_2$. Unfortunately, if the spatial matrix $A$ just fulfills Assumption \ref{assumption1},
we cannot estimate the upper bound of the norm $\|\mathcal{M}^{-1}\mathcal{R}\|_2$ well,
because the useful property of matrix spectral norm which is invariant under orthogonal transformations
will be unavailable. 
%Next, we improve Assumption 1 as that the spatial matrix %$A$ is
%symmetric negative semi-definite, then we can obtain %$\tilde{A} = V_aD_aV^{\top}_a$ and
%$J_1 = Q^{-1}_1 Q_2 = V_aD_JV^{\top}_a$ according to %Eq.~\eqref{eq4.2x} 
Next, we assume that the spatial matrix $A$ is normal and fulfills Assumption \ref{assumption1}. 
Then, $A$ admits a unitary eigendecomposition $\tilde{A} = V_a D_a V_a^*$, 
where $V_a$ is a unitary matrix (i.e., $V_a^* = V_a^{-1}$) and $D_a$ is diagonal. 
Consequently, $J_1 = Q_1^{-1} Q_2 = V_a D_J V_a^*$ according to Eq. \eqref{eq4.2x} and 
$D_J = {\rm diag}(\lambda_1,\lambda_2,\ldots,\lambda_{N_x})$ is the diagonal matrix of eigenvalues
of $Q^{-1}_1Q_2$ satisfying $|\lambda_j|\leq 1$, $ j=1,2,\ldots,N_x$, cf.~Lemma \ref{lemma3.2}. 
\begin{lemma}
Assuming that the spatial matrix $A$ is normal and fulfills Assumption \ref{assumption1}, then
\begin{equation*}
\|\mathcal{M}^{-1}\mathcal{R}\|_2 \leq \sqrt{N_t}.
\end{equation*}
\label{lemma4.1}
\vspace{-2.5em}
\end{lemma}
\begin{proof} We compute
\begin{equation*}
\begin{split}
\mathcal{M}^{-1}\mathcal{R} &=
\begin{bmatrix}
	I_s  &  &  & \\
	J_1  & I_s &  & \\
	\vdots & \ddots & \ddots & \\
	J_{N_t - 1} & \cdots & J_1 & I_s
\end{bmatrix}
\left( I_t \otimes Q_1^{-1} \right)
\begin{bmatrix}
	\bm{0}   & \cdots&  \bm{0}& Q_2\\
	 & \bm{0} & \cdots & \bm{0}\\
	&  & \ddots & \vdots\\
	&  &  & \bm{0}
\end{bmatrix} \\
& = \begin{bmatrix}
	I_s  &  &  & \\
	J_1  & I_s &  & \\
	\vdots & \ddots & \ddots & \\
	J_{N_t - 1} & \cdots & J_1 & I_s
\end{bmatrix} 
\begin{bmatrix}
	\bm{0}   & \cdots&  \bm{0} & J_1 \\
	 & \bm{0} & \cdots & \bm{0}\\
	&  & \ddots & \vdots\\
	&  &  & \bm{0}
\end{bmatrix}
= \begin{bmatrix}
	\bm{0}   & \cdots&  \bm{0}& J_1\\
	 & \bm{0} & \cdots & J_2\\
	&  & \ddots & \vdots\\
	&  &  & J_{N_t}
\end{bmatrix}.
\end{split}
\end{equation*}
% where $K^{-1}_{\ell} = (Q^{-1}_1Q_2)^{\ell}Q^{-1}_1$~($\ell = 0,1,\ldots,N_t-1$) and
%Then, we have $J^{\top}_n J_n = V_a (D^{2}_{J})^n V^{\top}_a$.
%\end{equation}
% where $K^{-1}_{\ell} = (Q^{-1}_1Q_2)^{\ell}Q^{-1}_1$~($\ell = 0,1,\ldots,N_t-1$) and
% \begin{equation}
% J_n = V_a D^{n}_{J} V^{\top}_a,\quad n = 1,2,\ldots,N_t,
% \end{equation}
% then we obtain $G^{\top}_nG_n = V_a(D^{2}_{J})^nV^{\top}_a$. 
% At this stage, one can obtain
% \begin{equation*}
% \begin{split}
% \|\mathcal{M}^{-1}\mathcal{R}\|^{2}_2 &= \lambda_{max}((\mathcal{M}^{-1}\mathcal{R})^{\top}\mathcal{M}^{-1}\mathcal{R})
% = \lambda_{max}\left(\sum^{N_t}_{n=1}G^{\top}_nG_n\right)\\
% & \leq \left\|\sum^{N_t}_{n=1}(D^{2}_{J})^n\right\|_2 = \max_{1\leq j\leq N_x}\sum^{N_t}_{n=1}(\lambda^{2}_j)^n
% \leq N_t
% \end{split}
% \end{equation*}
% due to $\lambda^{2}_j\in[0,1]$ and thus $\|\mathcal{M}^{-1}\mathcal{R}\|_2 \leq \sqrt{N_t}$.
% \end{proof}
%\begin{proof}
% We compute
% \begin{equation*}
% \mathcal{M}^{-1}\mathcal{R} = 
% \begin{bmatrix}
% I_s & & & \\
% J_1 & I_s & & \\
% \vdots & \ddots & \ddots & \\
% J_{N_t-1} & \cdots & J_1 & I_s
% \end{bmatrix}
% (I_t \otimes Q_1^{-1})
% \begin{bmatrix}
% \mathbf{0} & \cdots & \mathbf{0} & Q_2 \\
% & \mathbf{0} & \cdots & \mathbf{0} \\
% & & \ddots & \vdots \\
% & & & \mathbf{0}
% \end{bmatrix}
% =
% \begin{bmatrix}
% \mathbf{0} & \cdots & \mathbf{0} & J_1 \\
% \mathbf{0} & \cdots & \mathbf{0} & J_2 \\
% & \ddots & & \vdots \\
% & & & J_{N_t}
% \end{bmatrix}.
% \end{equation*}
Then, we have $J_n^* J_n = V_a |D_J|^{2n} V_a^*$, where $V_a$ is a unitary matrix ($V_a^* V_a = I$) 
and $|D_J| = {\rm diag}\big(|\lambda_1|, |\lambda_2|, \ldots,\\ |\lambda_{N_x}|\big)$. 
At this stage, one can obtain
\begin{align*}
\|\mathcal{M}^{-1}\mathcal{R}\|_2^2 &= \lambda_{\max}\left( (\mathcal{M}^{-1}\mathcal{R})^* \mathcal{M}^{-1}\mathcal{R} \right) 
= \lambda_{\max}\left( \sum_{n=1}^{N_t} J_n^* J_n \right) \\
&\le \left\| \sum_{n=1}^{N_t} |D_J|^{2n} \right\|_2 
= \max_{1 \le j \le N_x} \sum_{n=1}^{N_t} |\lambda_j|^{2n} \le N_t
\end{align*}
due to $|\lambda_j| \le 1$ (cf.~Lemma \ref{lemma3.2}) and thus $\|\mathcal{M}^{-1}\mathcal{R}\|_2 \le \sqrt{N_t}$.
\end{proof}

In view of (\ref{eq4.7}) and the above analysis, we can arrive at the following linear
convergence rate of the preconditioned GMRES($m$) method.
\begin{theorem}
\label{them4.2}
Assume that the spatial matrix $A$ is normal and fulfills Assumption \ref{assumption1}. For any given positive constant $\delta \in (0,1/2)$, the  left-preconditioned restarted GMRES for solving $\mathcal{M}{\bm u} = {\bm b}$ with the preconditioner $\mathcal{P}_{\alpha}$ achieves the following mesh-independent linear convergence rate:
\begin{equation}
\frac{\|{\bm r}_m^{(k)} \|_2}{\|{\bm r}_0\|_2}
% \leq \left[1 - \left(1 - 2\alpha\sqrt{\frac{T}{\tau}} \right)^2\right]^{km/2}
\leq \left[ 1 - (1 - 2\delta)^2\right]^{km/2} 
= \left[ 2\sqrt{\delta (1 - \delta)}\right]^{km},
\quad  \forall \alpha \in (0,\alpha_0),
\end{equation}
where $ \alpha_0 = \delta / \sqrt{N_t} = \delta\sqrt{\tau/T} $. 
% and ${\bm r}_k$ denotes the residual vector at the $k$-th iteration of the preconditioned GMRES.
\end{theorem}
\begin{proof}
When $\alpha \in (0,\alpha_0)$, we have $\alpha \|\mathcal{M}^{-1} \mathcal{R}\|_2  \leq \alpha \sqrt{N_t} \leq \delta  <1/2 $ according to Lemma \ref{lemma4.1}. Combining \eqref{eq4.9}, it holds that
\begin{equation}
\|\mathcal{P}^{-1}_{\alpha}\mathcal{M}\|_2\leq \frac{1}{1 - \alpha\|\mathcal{M}^{-1}\mathcal{R}\|_2}.
\label{eq4.15}
\end{equation}
Similar to the conclusion of Ref. \cite{Liu2020}, we have
$\|\mathcal{P}^{-1}_{\alpha}\mathcal{M} - \mathcal{I}\|_2 \leq \frac{\alpha\|\mathcal{M}^{-1}\mathcal{R}\|_2}
{1 - \alpha \|\mathcal{M}^{-1}\mathcal{R}\|_2}$ and
\begin{equation}
\begin{split}
\lambda_{\min}(\mathcal{H}(\mathcal{P}^{-1}_{\alpha}\mathcal{M})) &= 1 +
\lambda_{\min}(\mathcal{H}(\mathcal{P}^{-1}_{\alpha}\mathcal{M} - \mathcal{I}))
\geq 1 - \|\mathcal{H}(\mathcal{P}^{-1}_{\alpha}\mathcal{M} - \mathcal{I})\|_2 \\
&\geq 1 - \|\mathcal{P}^{-1}_{\alpha}\mathcal{M} - \mathcal{I}\|_2 > 1 -
\frac{\alpha\|\mathcal{M}^{-1}\mathcal{R}\|_2}
{1 - \alpha \|\mathcal{M}^{-1}\mathcal{R}\|_2}>0.
\end{split}
\label{eq4.11}
\end{equation}
due to the well-known inequalities $|\lambda_{\min}(\mathcal{H}(A))|\leq
\|\mathcal{H(A)}\|_2 \leq \|A\|_2$ \cite[p.~160]{Alfio2007} and it also implies that
the symmetric part of the preconditioned matrix $\mathcal{P}^{-1}_{\alpha}
\mathcal{M}$ is positive definite at the moment. Hence, combining \eqref{eq4.15} and
\eqref{eq4.11} we obtained the following GMRES convergence rate estimate
\begin{equation}
\begin{split}
\frac{\|{\bm r}_m^{(k)} \|_2}{\|{\bm r}_0\|_2}
&< \left[1 - \left(\frac{1 -\frac{\alpha\|\mathcal{M}^{-1}\mathcal{R}\|_2}{1 - \alpha \|\mathcal{M}^{-1}\mathcal{R}\|_2}}{\frac{1}{1 - \alpha\|\mathcal{M}^{-1}\mathcal{R}\|_2}}\right)^2\right]^{km/2}
= \left[1 - (1 - 2\alpha\|\mathcal{M}^{-1}\mathcal{R}\|_2)^2\right]^{km/2} \\
&\leq \left[ 1 - (1 - 2\delta)^2\right]^{km/2} 
= \left[ 2\sqrt{\delta (1 - \delta)}\right]^{km}.
\end{split}
\end{equation}
This completes the proof.
\end{proof}
The above theorem indicates that the iteration counts of the preconditioned GMRES($m$) iterations will be mesh-independent.
% if $\alpha$ is sufficiently small.  
It also shows that a smaller $\alpha$ leads to a faster convergence rate,
as indeed observed in numerical experiments. Among our tested examples, we observe that a fixed $\alpha = 0.1$
or $0.01$ already provides very fast mesh-independent convergence rate.  However, the theoretical condition
$\alpha = \mathcal{O}(\sqrt{\tau})$ is rather restrictive, because it implies that $\alpha$ goes to $0$ rapidly
as we refine the time step-size $\tau$.  In practical computation, the parameter $\alpha$ cannot be arbitrarily
small, because a too small $\alpha$ results in unacceptable roundoff errors in Step-(a) and Step-(c) of the
diagonalization procedure (\ref{eq3.2}), which may seriously pollute the obtained numerical solution;  see \cite{embree2023,Wu2022} for a discussion of these issues.
%%%%%%%%%%%
%%%%%%%%%%%%%%%%%%%%%%%
\begin{remark}[Applicability to non-normal matrices]
It should be noted that while the explicit {\rm GMRES($m$)} convergence rate in Theorem \ref{them4.2} relies on the normality of $A$ to establish the bound $\|\mathcal{M}^{-1}\mathcal{R}\|_2 \leq \sqrt{N_t}$ without involving the eigenvector condition number $\kappa(V_a)$, the spectral clustering result in Theorem \ref{thm3.1} and the diagonalizability in Theorem \ref{thm4.1} require only Assumption \ref{assumption1} (where $A$ need not be normal or symmetric). In practical applications, such as the non-normal spatial discretization matrices arising from option pricing PDEs, it is precisely this tight spectral clustering combined with the mild non-normality of $A$ that drives the fast, mesh-independent convergence observed in numerical experiments.
\end{remark}
%%%%%%%%%%%%%%
\begin{remark}
In the next section, we observe that the number of GMRES iterations may not reach the preset restarting parameter $m$. However, this does not undermine the adoption of the restarted GMRES($m$) algorithm. The standard GMRES method requires the storage of all generated orthogonal basis vectors of the Krylov subspace, and the storage requirement grows linearly with the number of iterations. When dealing with large-scale problems or when convergence is slow, this can become a significant limitation due to excessive memory demand. In contrast, the restarted GMRES bounds the storage requirement to $\mathcal{O}(m N_t N_x)$. This substantially reduces memory pressure and provides robustness against unforeseen convergence delays, ensuring the scalability of the algorithm when applied to more challenging problems.
\end{remark}
%%%%%%%%%%%%%%%%
\section{Numerical experiments}
\label{sec5}

In this section, we present several numerical experiments to assess the convergence behaviour and computational efficiency of the proposed generalized block $\alpha$-circulant preconditioner. The experiments are designed to validate the theoretical results established in the previous sections and to illustrate the practical performance of the proposed parallel-in-time solver.

All simulations were carried out in MATLAB R2023a on an ASUSTeK workstation equipped with a 13th Gen Intel(R) Core(TM) i9-13900K processor (3.00 GHz) and 128 GB RAM. CPU times (in seconds) were measured using MATLAB's {\tt tic}/{\tt toc} timing functions based on a partially parallel implementation of the proposed preconditioned iterative algorithms\footnote{The MATLAB implementation employs {\tt parfor} to exploit the available shared-memory parallelism.}. In the present paper, we focus on the convergence behaviour of the proposed algorithms. A comprehensive study of their performance on large-scale parallel architectures will be reported in future work.

Although our theoretical analysis is based on the left-preconditioned GMRES($m$) method, the right-preconditioned variant is preferable in practice since its stopping criterion is based on the true residual rather than on the preconditioned residual, which may differ substantially from the true residual \cite{saad2003iterative}. Throughout this section, we therefore employ the right-preconditioned GMRES(40) solver available in the $i$FEM package \cite{Chen:2008ifem}. Unless stated otherwise, a zero initial guess and a stopping tolerance of ${\tt tol}=10^{-9}$ based on the relative residual norm are used. The abbreviation ``Its'' denotes the total number of GMRES inner iterations (equivalently, matrix-vector products), while ``DoFs'' refers to the number of spatial degrees of freedom.

%%%%%%%%%%%%%%%%%%%%%%%%%%%%%%%%%%
\subsection{Heston model}\label{sec5.1}

We first consider the Heston stochastic volatility model, governed by the PDE

\begin{equation}
\frac{\partial u}{\partial t}
=
\frac{1}{2}vs^2\frac{\partial^2u}{\partial s^2}
+
\sigma sv\rho\frac{\partial^2u}{\partial s\partial v}
+
\frac{1}{2}\sigma^2v\frac{\partial^2u}{\partial v^2}
+
rs\frac{\partial u}{\partial s}
+
\kappa(\eta-v)\frac{\partial u}{\partial v}
-
ru,
\label{eq5.1}
\end{equation}
where $u(s,v,t)$ denotes the price of a European option with underlying asset price $s$ and instantaneous variance $v$ at time $T-t^{*}$. The model is defined on the unbounded domain
\[
0\le t\le T,\qquad s>0,\qquad v>0,
\]
where $T$ denotes the maturity. The parameters satisfy $\kappa>0$, $\sigma>0$ and $\rho\in[-1,1]$. Throughout this example we assume the Feller condition
\[
2\kappa\eta>\sigma^2,
\]
which guarantees that the variance process remains strictly positive.
The initial condition is given by
\begin{equation}\label{eq5.2}
u(s,v,0)=\max\{0,s-K\},
\end{equation}
where $K>0$ denotes the strike price. The computational domain is truncated to the bounded region
\[
[0,S_{\max}]\times[0,V_{\max}],
\]
where $S_{\max}$ and $V_{\max}$ are chosen sufficiently large to minimize the influence of the artificial boundaries. In the present experiments we set $S_{\max}=800$ and $V_{\max}=4$. For a European call option, the boundary conditions are
\begin{eqnarray}
u(0,v,t)&=&0,\label{eq5.3}\\
u(s,V_{\max},t)&=&s,\label{eq5.4}\\
\frac{\partial u}{\partial s}(S_{\max},v,t)&=&1.\label{eq5.5}
\end{eqnarray}
The remaining boundary condition corresponds to the degenerate boundary at $v=0$ and is given by
\begin{equation*}
\frac{\partial u}{\partial t}(s,0,t)
=
\kappa\eta\frac{\partial u}{\partial v}(s,0,t)
+
rs\frac{\partial u}{\partial s}(s,0,t)
-
ru(s,0,t),
\end{equation*}
which serves as the artificial boundary condition for the Heston PDE \eqref{eq5.1}. After spatial discretization of Eqs.~\eqref{eq5.1}-\eqref{eq5.5}, with 
$N_s$ and $N_v$ denoting the number of grid nodes along the $s$- and $v$-directions, respectively, the resulting AaO linear system given in \eqref{eq2.2} is solved using the proposed preconditioned iterative method.

To assess the accuracy of the numerical solutions, we define the relative error by
\[
{\rm Err}
=
\frac{\left|{\rm Price}(S_0,V_0)-{\rm Price}_{\rm ref}\right|}
{\left|{\rm Price}_{\rm ref}\right|},
\]
where the reference value ${\rm Price}_{\rm ref}$ is computed using the COS method \cite{Fang09}. If the point $(S_0,V_0)$ does not coincide with a grid point, the corresponding option value is obtained by bilinear interpolation; see \cite[p.~68]{Graaf} for further details.

The numerical experiments are carried out for the following two sets of Heston model parameters \cite{Hout2010}:
\begin{itemize}
\item Set \uppercase\expandafter{\romannumeral1}: $T = 1, K = 100, \kappa = 1.5, \eta = 0.04, \sigma = 0.3, r = 0.025, \rho = -0.9, S_0 = 100, V_0 = 0.5$;
\item Set \uppercase\expandafter{\romannumeral2}: $T = 3, K = 100, \kappa = 0.6067, \eta = 0.0707, \sigma = 0.2928, r = 0.03, \rho = -0.7571, S_0 = 100, V_0 = 0.2$.
\end{itemize}
%%%%%%%%%%%%%%%%%%%%%%%%%%%%%%%%%
\begin{table}[ht]\tabcolsep=5.0pt
	\caption{Convergence of different right-preconditioned GMRES(40) methods with $N_t = N_s = 2N_v$ and $\alpha = 10^{-3}$ (Set \uppercase\expandafter{\romannumeral1}).}
	\centering
	\begin{tabular}{ccccccccc}
		\toprule
            Mesh   &$N_t$ &DoFs        &\multicolumn{3}{c}{$\mathcal{P}_1$} & \multicolumn{3}{c}{$\mathcal{P}_{\alpha}$}\\
        %%%%%%%%%%%%%%%%%%
        \cmidrule(lr){4-6}\cmidrule(lr){7-9}
                   &              & &Its   & CPU &Err   & Its  & CPU &Err   \\
        \midrule
		{\tt uniform} &48  & 55,296      & 29 & 0.82   & 1.348e-2 & 4  & 0.09 & 1.348e-2\\
		           &96  & 442,368     & 25 & 4.76   & 3.213e-3 & 4    & 0.51&3.213e-3 \\
 		           &192 & 3,538,944   & 26 & 51.03  & 8.003e-4 & 4    & 5.52& 8.003e-4 \\
		           &384 & 28,311,552  & 27 & 664.46 & 1.997e-4 & 4 
  & 76.34&1.997e-4 \\
%%%%%%%%%%%%%%%%%%%%%%%%%%%%%%%%%%%%%
		\hline
		{\tt nonuniform}  &36  &23,328     &21  &0.29   &7.679e-3  & 4 &0.05& 7.679e-3\\
		               &72  &186,624    &24  &1.46   & 1.476e-3  &4&0.18& 1.476e-3     \\
		                 &144 &1,492,992  &27  &14.05  & 4.339e-4   &4&1.47 & 4.339e-4 \\
		               &288 &11,943,936 &29  &154.03 &1.259e-4  &4 &16.01 &1.259e-4\\
		\hline
	\end{tabular}
\label{tab1}
\end{table}

%%%%%%%%%%%%% Table 2 %%%%%%%%%%%%%%%%%%%%
\begin{table}[ht]\tabcolsep = 5.0pt
\caption{Convergence of different right-preconditioned GMRES(40) methods with $N_t = N_s = 2N_v$ and $\alpha = 10^{-3}$ (Set \uppercase\expandafter{\romannumeral2})}
\centering
\begin{tabular}{ccccccccc}
    \toprule
            Mesh   &$N_t$ &DoFs        &\multicolumn{3}{c}{$\mathcal{P}_1$} & \multicolumn{3}{c}{$\mathcal{P}_{\alpha}$}\\
        %\hline
        \cmidrule(lr){4-6}\cmidrule(lr){7-9}
                   &              & &Iters   & CPU &Err   & Its  & CPU &Err   \\
        \midrule
        %%%%%%%
        {\tt uniform} & 50  &62,500    & 27 & 0.83 & 1.281e-2 & 4 & 
                      0.10 & 1.281e-2\\
                      & 100 &500,000    & 27 & 6.94 & 1.725e-3 & 4 & 0.64 & 1.725e-3 \\
 		         & 200 &4,000,000  & 28 & 62.60 & 6.168e-4 & 4 & 
                      6.28 &6.169e-4   \\
                      & 400 &32,000,000 & 28 & 962.07 &1.550e-4  & 4 & 96.51 & 1.551e-4   \\
		\hline
		{\tt nonuniform} & 40 &32,000      &21 & 0.33 & 2.640e-3 & 4 & 0.06 & 2.640e-3  \\		           
		              & 80  & 256,000    &24 & 2.26 & 7.929e-4 & 4 & 0.29 & 7.929e-4  \\
		              & 160 & 2,048,000  &24 & 16.98 & 1.862e-4 & 4 & 2.16 & 1.862e-4 \\
                        & 320 & 16,384,000 &25 &176.43 & 4.665e-5 & 4 & 22.05 & 4.665e-5 \\
        \hline
\end{tabular}
\label{tab2}
\end{table}
The performance of the right-preconditioned GMRES(40) method with the preconditioners $\mathcal{P}_1$ and $\mathcal{P}_{\alpha}$ is reported in Tables~\ref{tab1}--\ref{tab2} for the AaO system arising from the Crank--Nicolson temporal discretization using both uniform and nonuniform spatial meshes. The results correspond to Parameter Sets \uppercase\expandafter{\romannumeral1} and \uppercase\expandafter{\romannumeral2}, respectively.

Several observations can be made. First, both preconditioners produce accurate numerical solutions, yielding essentially identical relative errors for all mesh sizes considered. Second, the proposed preconditioner $\mathcal{P}_{\alpha}$ substantially reduces both the number of GMRES($40$) iterations and the overall computational time compared with the classical preconditioner $\mathcal{P}_1$. In particular, the iteration count remains equal to four for all discretization levels, demonstrating an excellent mesh-independent convergence behaviour. Finally, although the nonuniform spatial discretization achieves higher accuracy with fewer spatial degrees of freedom, it has virtually no influence on the convergence behaviour of either preconditioner. This observation agrees with the theoretical analysis, since the proposed preconditioners are constructed solely from the temporal discretization.

%%%%%%%%%%% Figure 1: Heston model (nonuniform mesh, Set I) %%%%%%%%%%%
\begin{figure}[ht]
	\setlength{\tabcolsep}{0.2pt}
	\centering
	\begin{tabular}{m{0.4cm}<{\centering} m{5.3cm}<{\centering} m{5.3cm}<{\centering} m{5.3cm}<{\centering}}
		& $\mathcal{M}$ & $\mathcal{P}_{1}^{-1}\mathcal{M}$ & $\mathcal{P}_{\alpha}^{-1}\mathcal{M}$ \\
		\rotatebox{90}{{\tt uniform}} &
		\includegraphics[width=2.0in,height=1.9in]{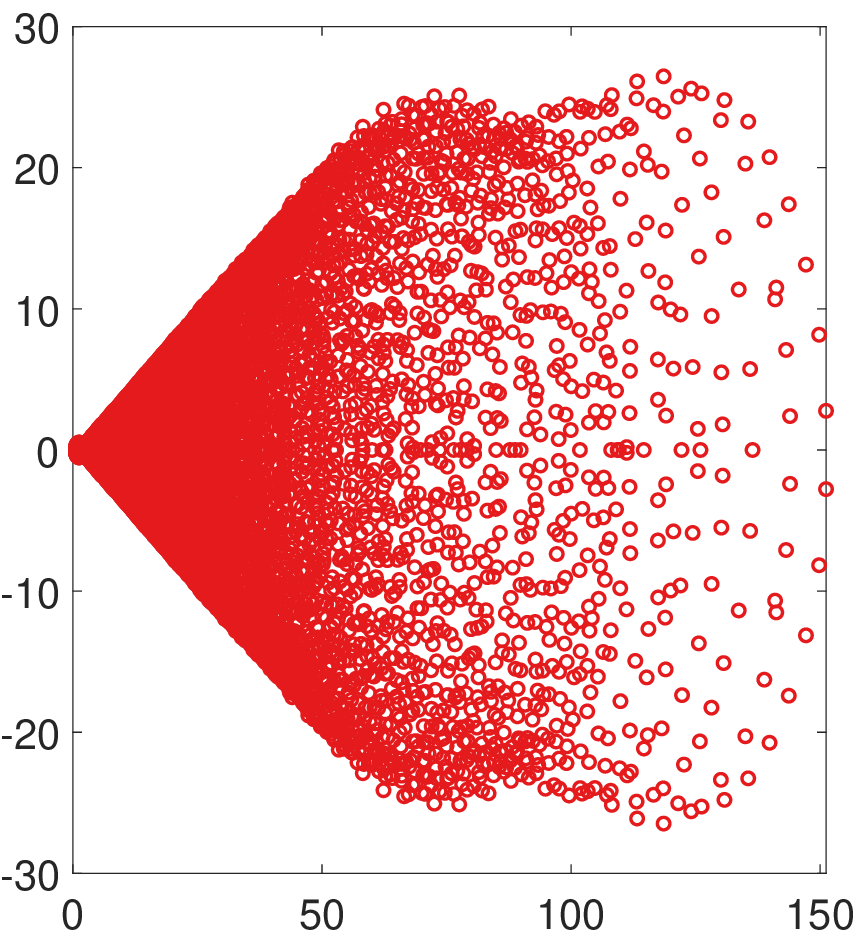} &
		\includegraphics[width=2.0in,height=1.9in]{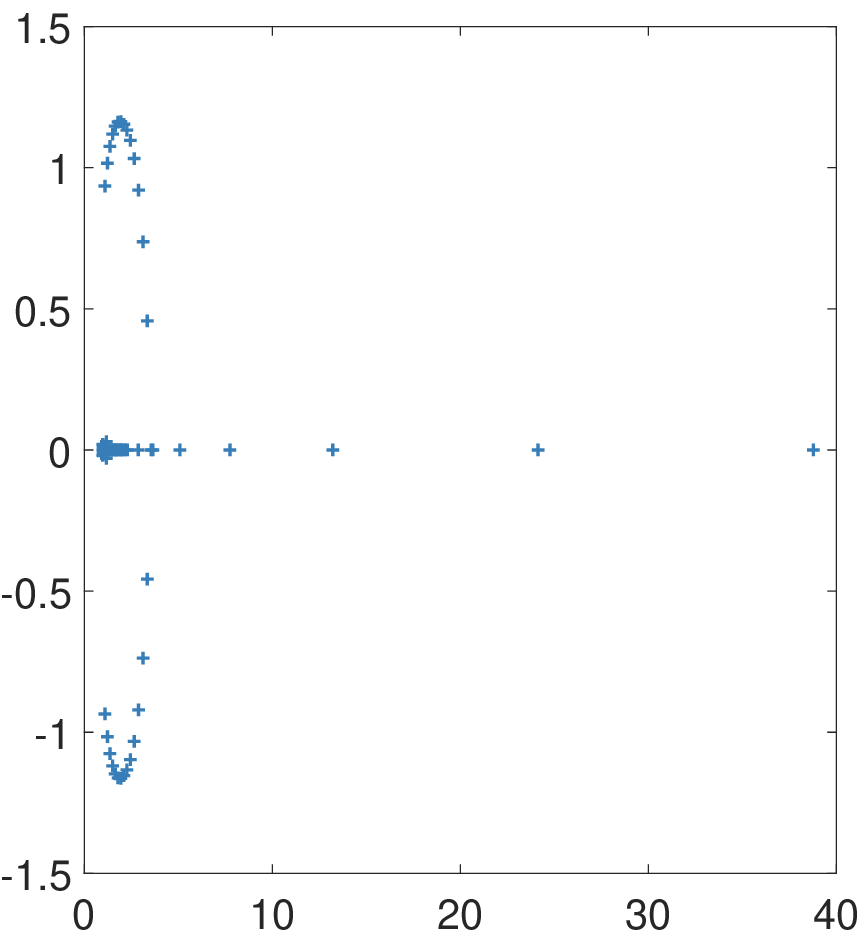} &
		\includegraphics[width=2.0in,height=1.9in]{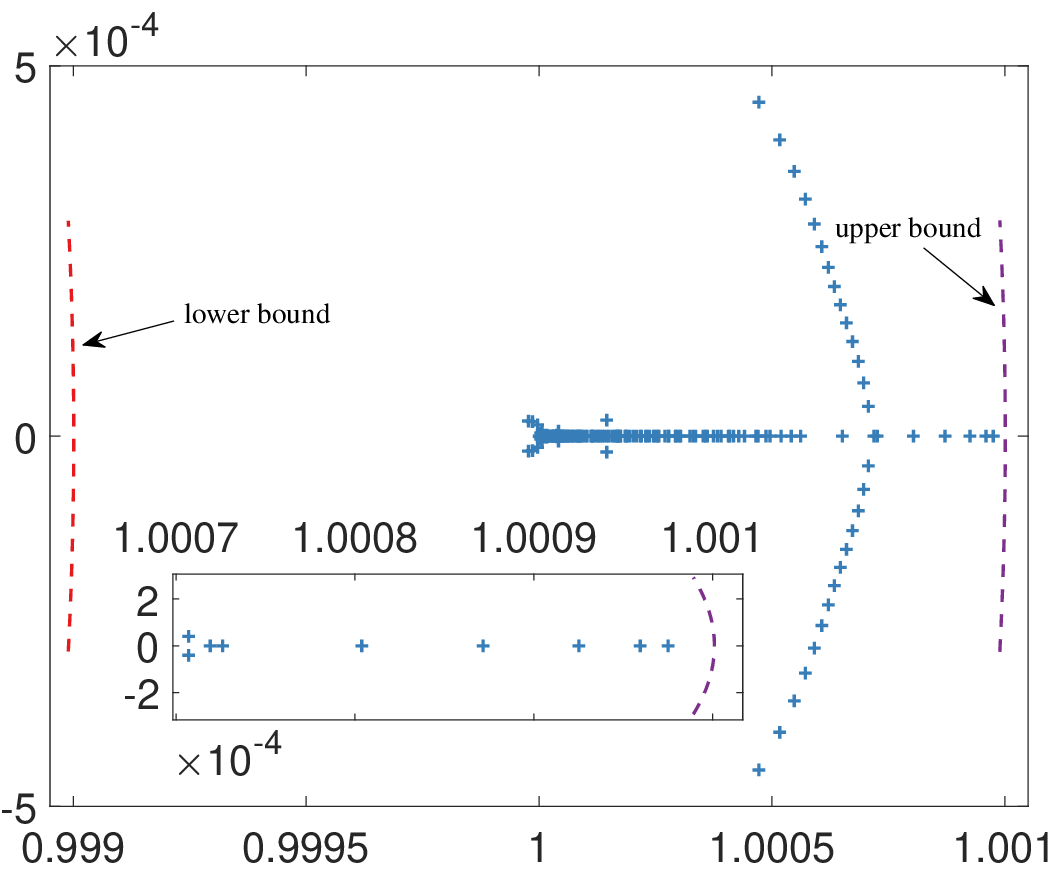} \\
		\rotatebox{90}{{\tt nonuniform}} &
		\includegraphics[width=2.0in,height=1.9in]{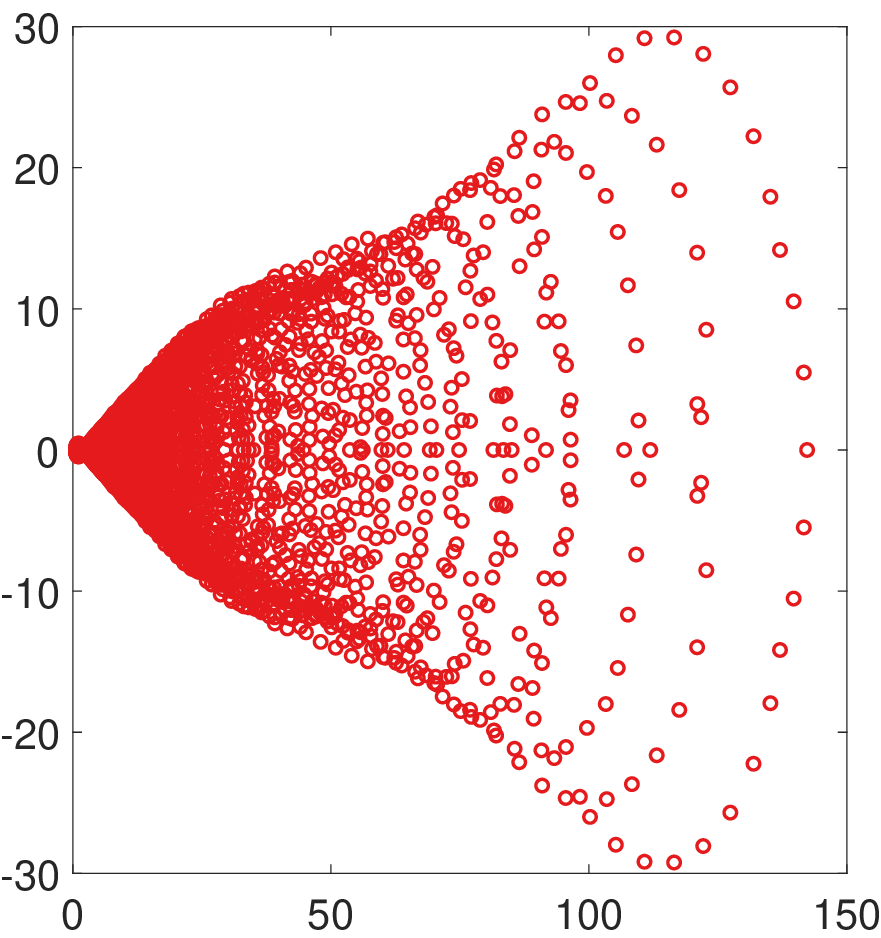} &
		\includegraphics[width=2.0in,height=1.9in]{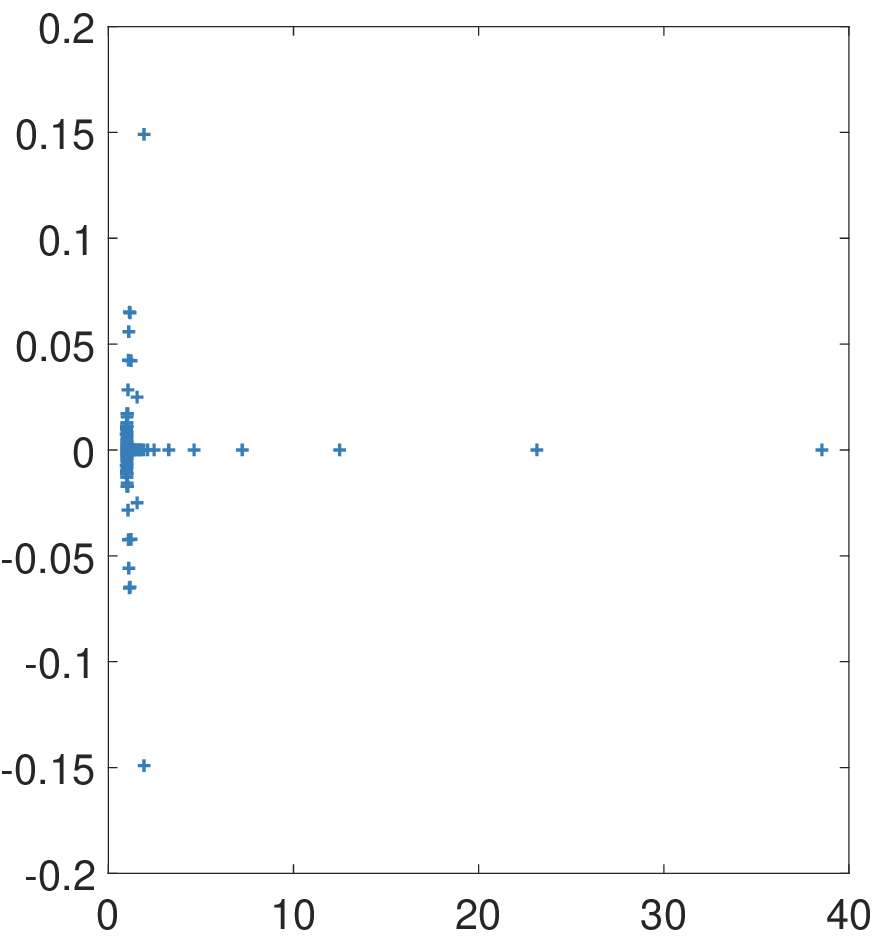} &
		\includegraphics[width=2.0in,height=1.9in]{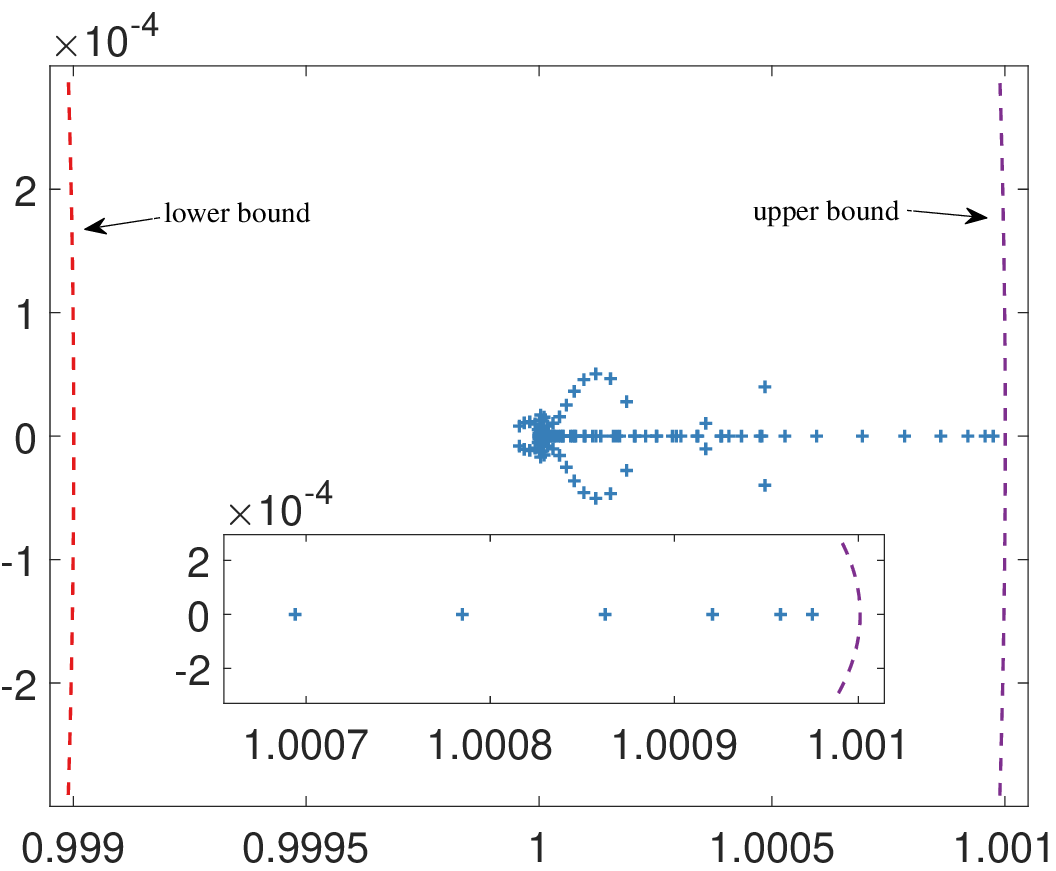} \\
	\end{tabular}
	\caption{Eigenvalue distributions of the original and preconditioned matrices, $\mathcal{M}$, $\mathcal{P}^{-1}_1\mathcal{M}$ and $\mathcal{P}^{-1}_{\alpha}\mathcal{M}$, for Parameter Set \uppercase\expandafter{\romannumeral1} using a uniform spatial discretization with $N_t=N_s=2N_v=36$ and $\alpha=10^{-3}$.}
	\label{fig1}
\end{figure}

%%%%%%%%%% Figure 2: Heston model (nonuniform mesh, Set II) %%%%%%%%%%%
\begin{figure}[ht]
\setlength{\tabcolsep}{0.2pt}
	\centering
	\begin{tabular}{m{0.4cm}<{\centering} m{5.3cm}<{\centering} m{5.3cm}<{\centering} m{5.3cm}<{\centering}}
		& $\mathcal{M}$ & $\mathcal{P}_{1}^{-1}\mathcal{M}$ & $\mathcal{P}_{\alpha}^{-1}\mathcal{M}$ \\
		\rotatebox{90}{{\tt uniform}} &
		\includegraphics[width=2.0in,height=1.9in]{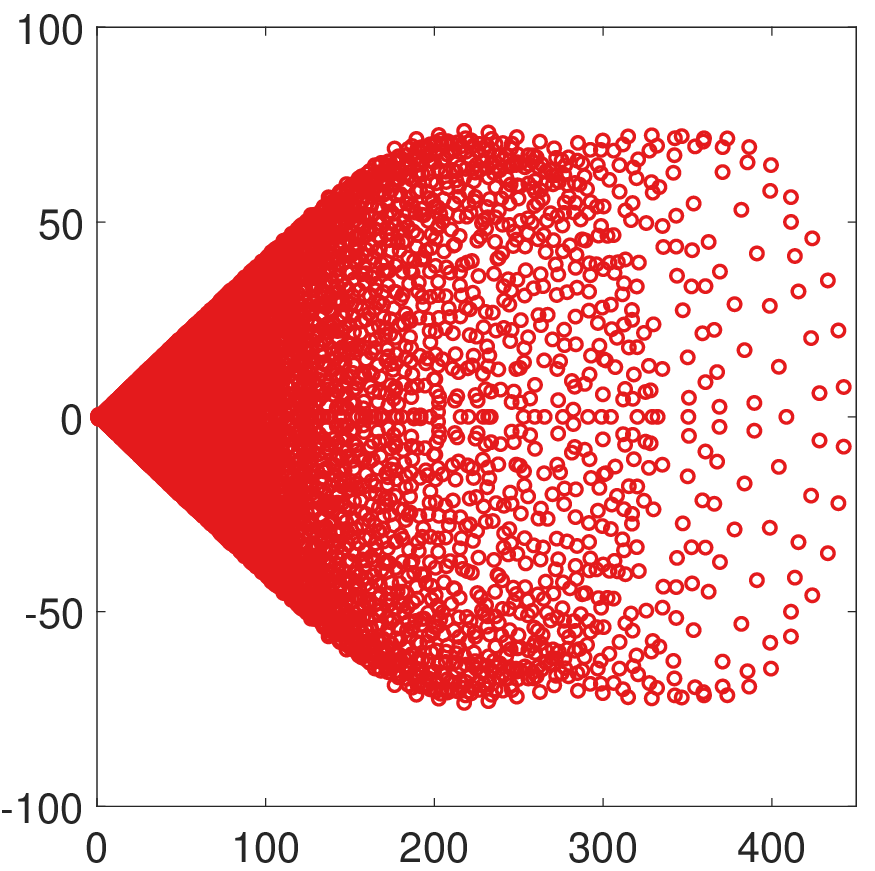} &
		\includegraphics[width=2.0in,height=1.9in]{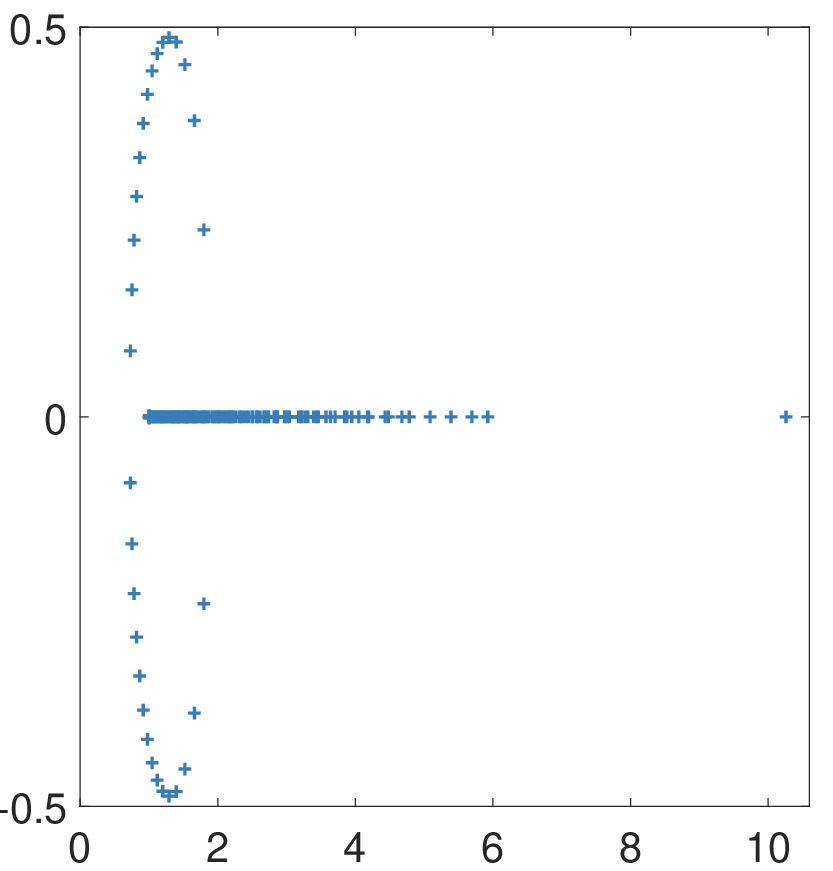} &
		\includegraphics[width=2.0in,height=1.9in]{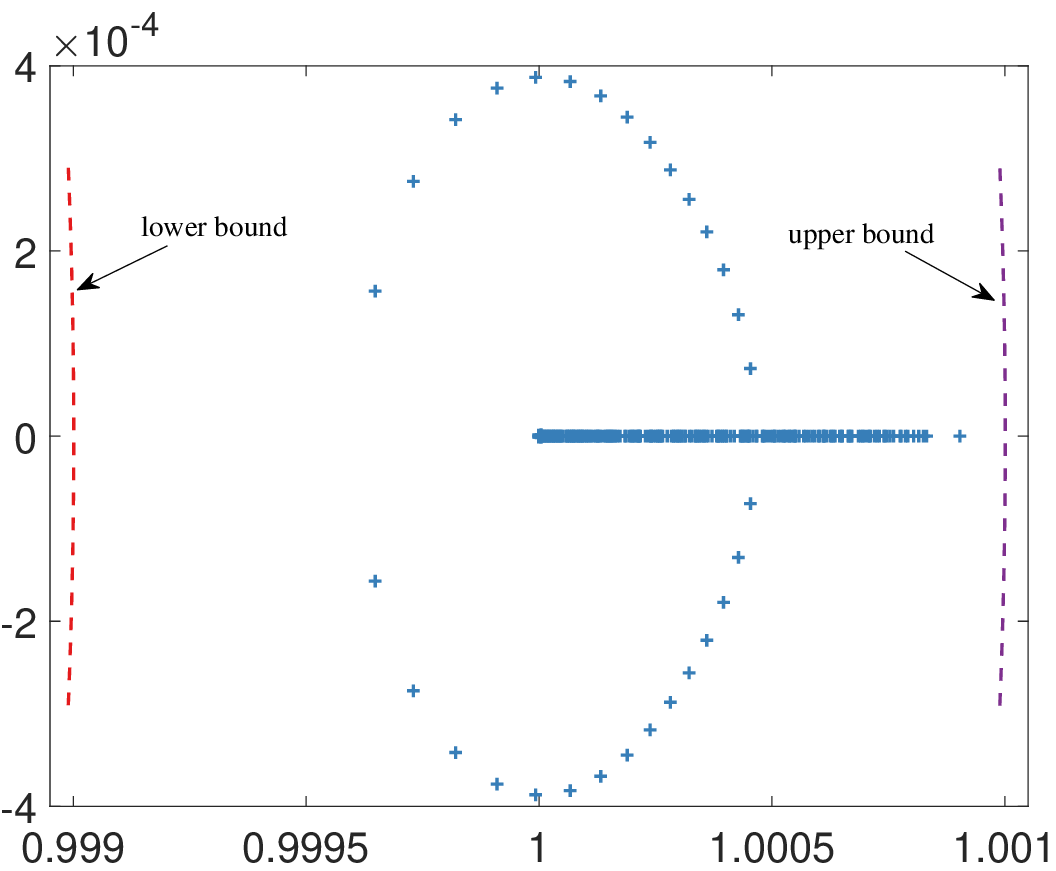} \\
		\rotatebox{90}{{\tt nonuniform}} &
		\includegraphics[width=2.0in,height=1.9in]{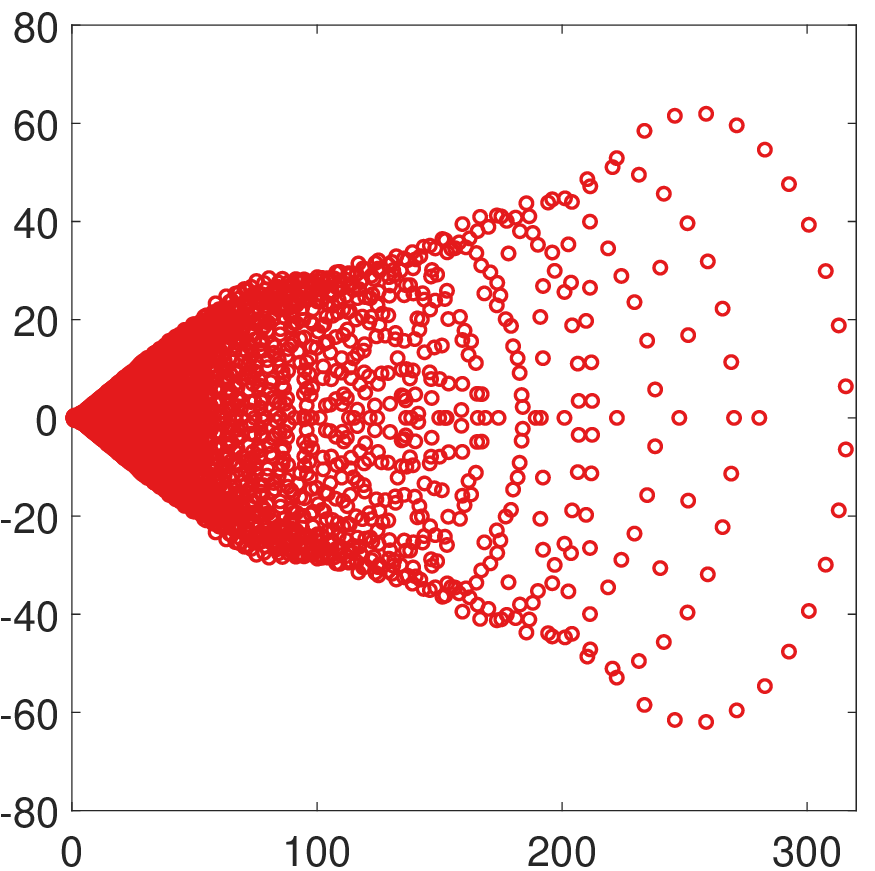} &
		\includegraphics[width=2.0in,height=1.9in]{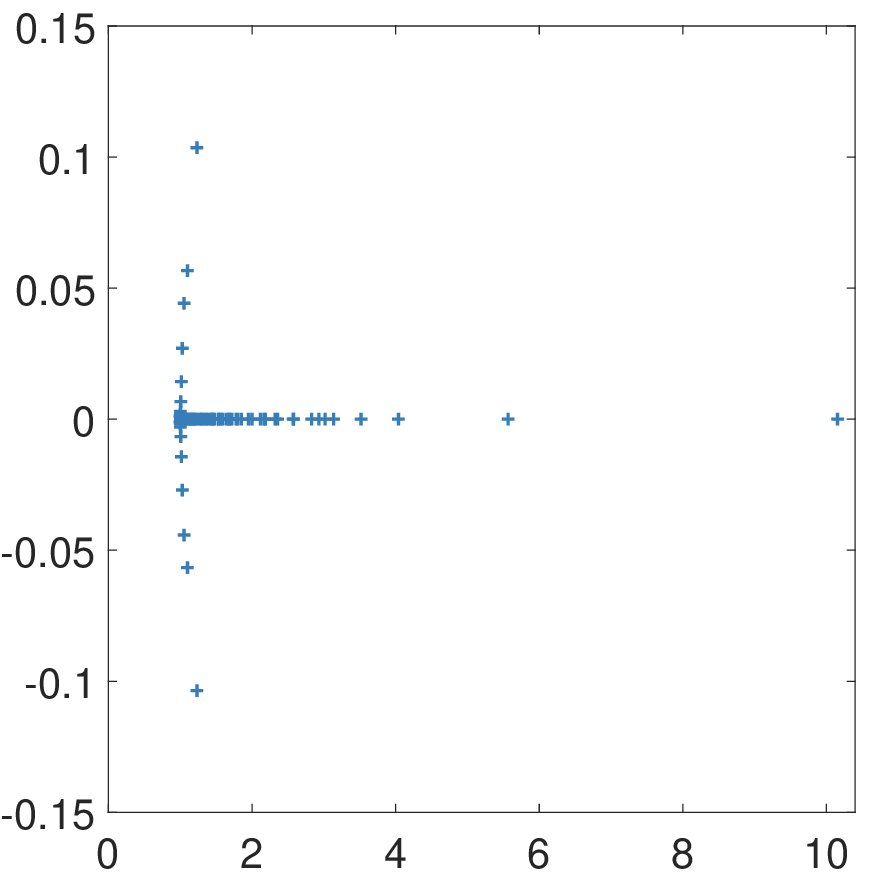} &
		\includegraphics[width=2.0in,height=1.9in]{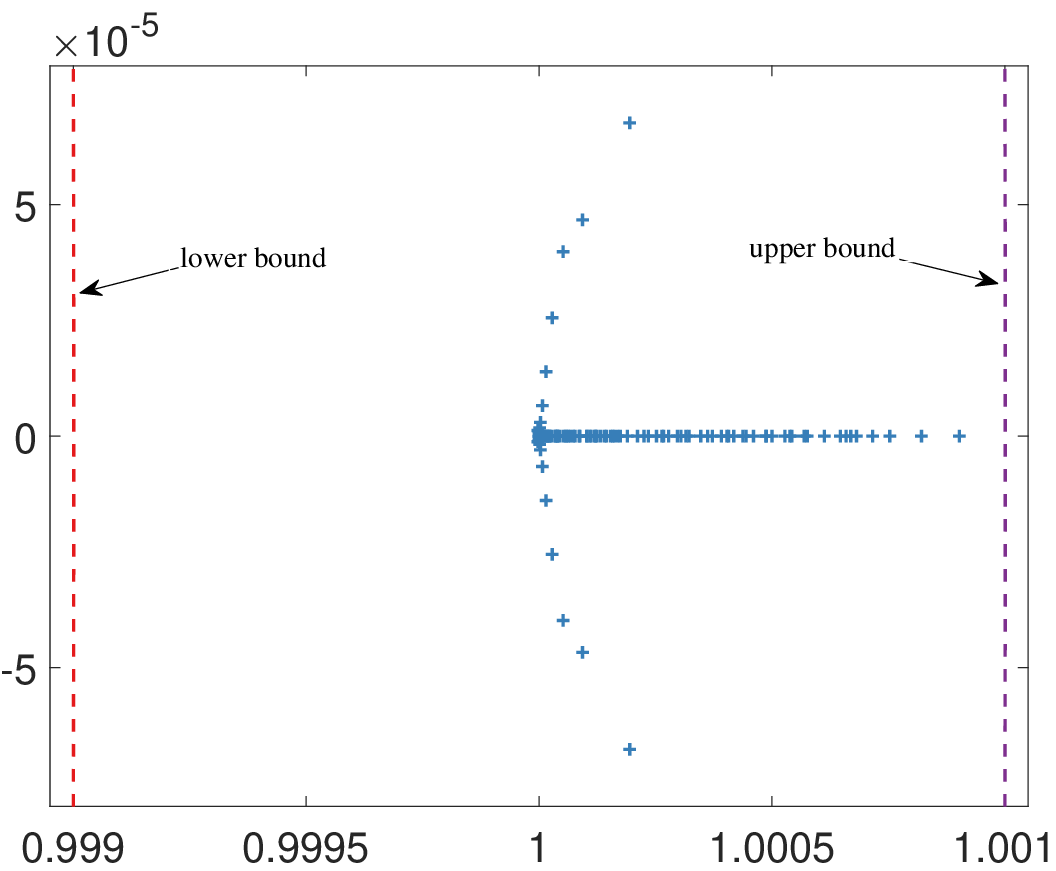} \\
	\end{tabular}
	\caption{Eigenvalue distributions of the original and preconditioned matrices, $\mathcal{M}$, $\mathcal{P}^{-1}_1 \mathcal{M}$ and $\mathcal{P}^{-1}_{\alpha}\mathcal{M}$, for Parameter Set \uppercase\expandafter{\romannumeral2} using a nonuniform spatial discretization with $N_t=N_s=2N_v=36$ and $\alpha=10^{-3}$.}
\label{fig2}
\end{figure}

Figures~\ref{fig1}--\ref{fig2} present the eigenvalue distributions of the original and preconditioned systems for the two Heston test problems, thereby illustrating the spectral results established in Theorem~\ref{th3.1}. As expected, the spectrum of $\mathcal{P}_{\alpha}^{-1}\mathcal{M}$ is significantly more tightly clustered around one than that of $\mathcal{P}_{1}^{-1}\mathcal{M}$, explaining the substantial reduction in the number of GMRES($40$) iterations observed in Tables~\ref{tab1}--\ref{tab2}. Moreover, all eigenvalues of $\mathcal{P}_{\alpha}^{-1}\mathcal{M}$ are confined to the annulus $\Omega_{\alpha}$, which is in complete agreement with the theoretical spectral bounds derived in Theorem~\ref{th3.1}.
%\sigma(A)\subseteq \mathbb{C}=\{z = \eta + \mathrm{\mathbf{i}}\xi|\eta\leq 0, \xi\in\},
%\end{equation}
%where $\mathrm{\mathbf{i}}$ is the ``imaginary unit", that satisfies %$\mathrm{}^2 = -1.
%%%%%%%%%%%%%%%%%%%%%%%%%
\subsection{Stochastic Alpha Beta and Rho (SABR) model}

As a second test problem, we consider the PDE associated with the SABR model \cite{Graaf,Sydow19}, which is widely used in the modelling of interest-rate and foreign-exchange derivatives. The governing PDE is given by
\begin{equation}\label{eq5.7}
\frac{\partial u}{\partial t}
=
\frac{1}{2}v^2s^{2\beta}D^{2(1-\beta)}
\frac{\partial^2u}{\partial s^2}
+
\rho\sigma s^{\beta}D^{(1-\beta)}v^2
\frac{\partial^2u}{\partial s\partial v}
+
\frac{1}{2}\sigma^2v^2
\frac{\partial^2u}{\partial v^2}
+
rs\frac{\partial u}{\partial s}
-
ru,
\quad
D(t)=e^{-rt},
\end{equation}
for $s>0$, $v>0$ and $0\le t<T$.

The computational domain is chosen in the same manner as in Section \ref{sec5.1}. The initial condition and the first three boundary conditions are identical to those given in Eqs.~\eqref{eq5.2}-\eqref{eq5.5}. The remaining boundary condition, corresponding to the degenerate boundary at $v=0$, is given by
\begin{equation*}
\frac{\partial u(s,0,t)}{\partial t}
=
rs\frac{\partial u(s,0,t)}{\partial s}
-
ru(s,0,t).
\end{equation*}

After applying the spatial discretization, the resulting all-at-once linear system given in \eqref{eq2.2} is solved using the proposed preconditioned iterative method. The relative error is again defined by
\begin{equation*}
{\rm Err}
=
\frac{\left|{\rm Price}(S_0,V_0)-{\rm Price}_{\rm ref}\right|}
{\left|{\rm Price}_{\rm ref}\right|},
\end{equation*}
where the reference value ${\rm Price}_{\rm ref}$ is taken from Problem~1 in \cite[Section~4.1]{Sydow19} and is computed using the script {\tt Script\_Compare\_European.m} contained in the package\footnote{The source code is available at \url{https://github.com/jkirkby3/PROJ_Option_Pricing_Matlab}.}. If the point $(S_0,V_0)$ does not coincide with a grid point, the corresponding option value is obtained by bilinear interpolation; see \cite[p.~68]{Graaf} for details.

The numerical experiments are performed for the following two sets of SABR model parameters \cite{Sydow19}:
\begin{itemize}
\item Set \uppercase\expandafter{\romannumeral3}: $S_{max}=24$, $V_{max}=3$, $T=2$, $\beta=0.5$, $\sigma=0.4$, $r=0.0$, $\rho=0.0$, $K=S_0=0.5$, $V_0=0.5$;
\item Set \uppercase\expandafter{\romannumeral4}: $S_{max}=4$, $V_{max}=2$, $T=10$, $\beta=0.5$, $\sigma=0.8$, $r=0.0$, $\rho=-0.6$, $K=S_0=0.07$, $V_0=0.4$.
\end{itemize}

\begin{table}[ht]\tabcolsep=5.0pt
\caption{Convergence of different right-preconditioned GMRES(40) methods with $N_t=N_s=2N_v$ and $\alpha=10^{-3}$.}
\centering
%\begin{table}[ht]\tabcolsep=5.0pt
%\caption{Convergence of different right-preconditioned GMRES(40) methods with $N_t = N_s = 2N_v$ and $\alpha = 10^{-3}$.}
%\centering
\begin{tabular}{ccccccccc}
    \toprule
    Set   &$N_t$ &DoFs        &\multicolumn{3}{c}{$\mathcal{P}_1$} & \multicolumn{3}{c}{$\mathcal{P}_{\alpha}$}\\
    \cmidrule(lr){4-6}\cmidrule(lr){7-9}
                   &              & &Its   & CPU &Err   & Its  & CPU &Err   \\
    \midrule
    %%%%%%%%%%%%%%%%%%%%%%%%%
     \uppercase\expandafter{\romannumeral3} & 48  &55,296   & $\ddag$ & --- & --- & 4 & 0.08 & 1.365e-1\\
                      & 96 &442,368    & $\ddag$ & --- & --- & 4 & 0.38 & 3.102e-2 \\
 		         & 192 &3,538,944  & $\ddag$ & --- & --- & 4 & 
                      3.11 &7.003e-3   \\
                      & 384 &28,311,552 & $\ddag$ & --- & --- & 4 & 33.45 & 1.550e-3   \\
    \hline
    \uppercase\expandafter{\romannumeral4} & 48 &55,296 &$\ddag$ & --- &--- & 4 &0.09 & 4.760e-2\\
    &96  &442,368    & $\ddag$ & --- & --- & 4 &0.40  &2.191e-2\\
    &192 &3,538,944  & $\ddag$ & --- & --- & 4 &3.68  &7.892e-3\\
    &384 &28,311,552 & $\ddag$ & --- & --- & 4 &37.61 &2.839e-3\\
\hline
\end{tabular}
\label{tab3}
\end{table}
%\end{table}

Table~\ref{tab3} reports the performance of the right-preconditioned GMRES(40) method with the preconditioners $\mathcal{P}_1$ and $\mathcal{P}_{\alpha}$ for the SABR model using Parameter Sets \uppercase\expandafter{\romannumeral3} and \uppercase\expandafter{\romannumeral4}. The symbol ``$\ddag$'' indicates that the preconditioner $\mathcal{P}_1$ is singular, causing the corresponding GMRES(40) iteration to fail. Consequently, neither the CPU time nor the relative error can be reported; see Remark~\ref{remark1x} for further discussion.

In contrast, the proposed preconditioner $\mathcal{P}_{\alpha}$ remains fully effective. For both parameter sets, the GMRES(40) method converges in four iterations for all discretization levels considered, demonstrating a mesh-independent convergence behaviour. These results clearly illustrate the robustness of the proposed preconditioner with respect to both the temporal and spatial discretization parameters.

To further illustrate the theoretical results, Fig.~\ref{fig3} displays the eigenvalue distributions of the original and preconditioned systems. The eigenvalues of $\mathcal{P}_{\alpha}^{-1}\mathcal{M}$ are strongly clustered around one and satisfy the bounds established in Theorem~\ref{th3.1}. Figure~\ref{fig4} shows that the spatial discretization matrix $A$ possesses several zero eigenvalues. As discussed in Remark~\ref{remark1x}, this causes the classical preconditioner $\mathcal{P}_1$ to become singular, explaining its failure in Table~\ref{tab3}. Moreover, the presence of these zero eigenvalues shows that the upper spectral bound derived in Theorem~\ref{th3.1} is attained, confirming the sharpness of the theoretical analysis; see Remark~\ref{remark2x} and Fig.~\ref{fig3}.

Overall, these experiments demonstrate that the generalized block $\alpha$-circulant preconditioner is both robust and efficient for the SABR model. In particular, it successfully handles situations in which the classical block circulant preconditioner becomes singular while maintaining excellent convergence properties.
%$[\romannumeral3)] If the choice () is chosen for the operator
%$$, then the spatial matrix $A$ will be symmetric semi-definite
%and thus it naturally satisfies Assumption .

%%%%%%%%%%%%%%%%%%%
% ------------ Figure 3: SABR model (Set III~IV) ----------------------
\begin{figure}[ht]
\setlength{\tabcolsep}{0.2pt}
	\centering
	\begin{tabular}{m{0.4cm}<{\centering} m{5.3cm}<{\centering} m{5.3cm}<{\centering}}
		& $\mathcal{M}$ & $\mathcal{P}^{-1}_{\alpha}\mathcal{M}$ \\
		\rotatebox{90}{Set \uppercase\expandafter{\romannumeral3}} &
		\includegraphics[width=2.0in,height=1.9in]{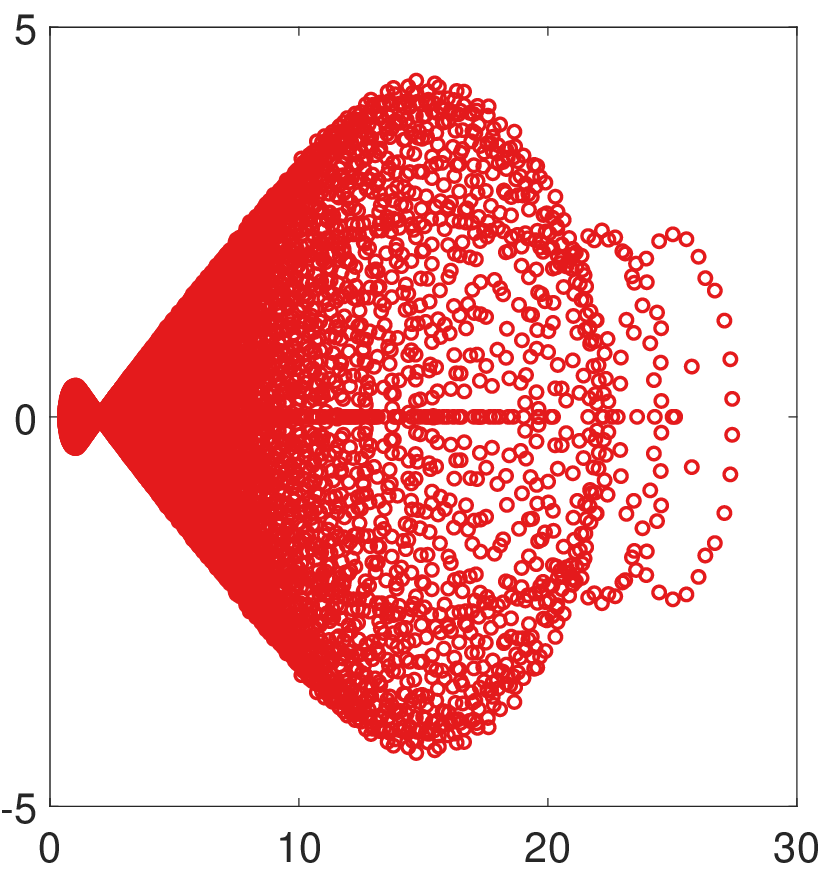} &
		\includegraphics[width=2.0in,height=1.9in]{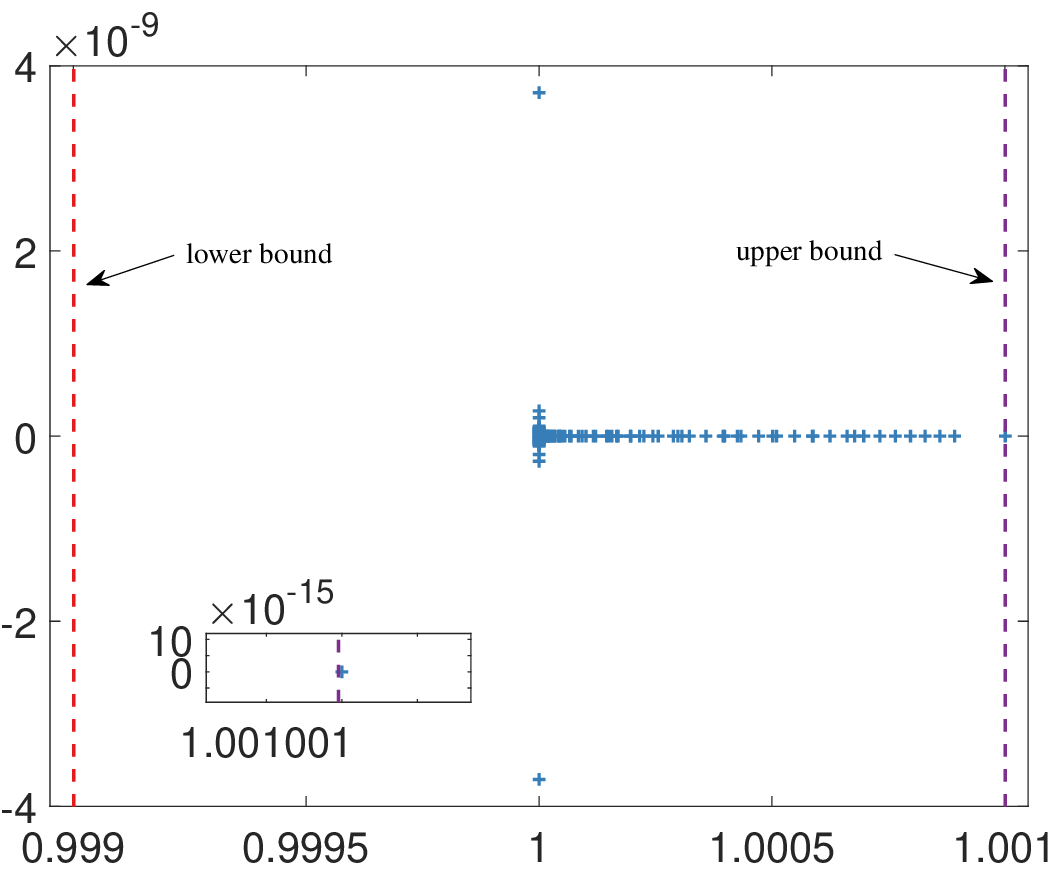} \\
		\rotatebox{90}{Set \uppercase\expandafter{\romannumeral4}} &
		\includegraphics[width=2.0in,height=1.9in]{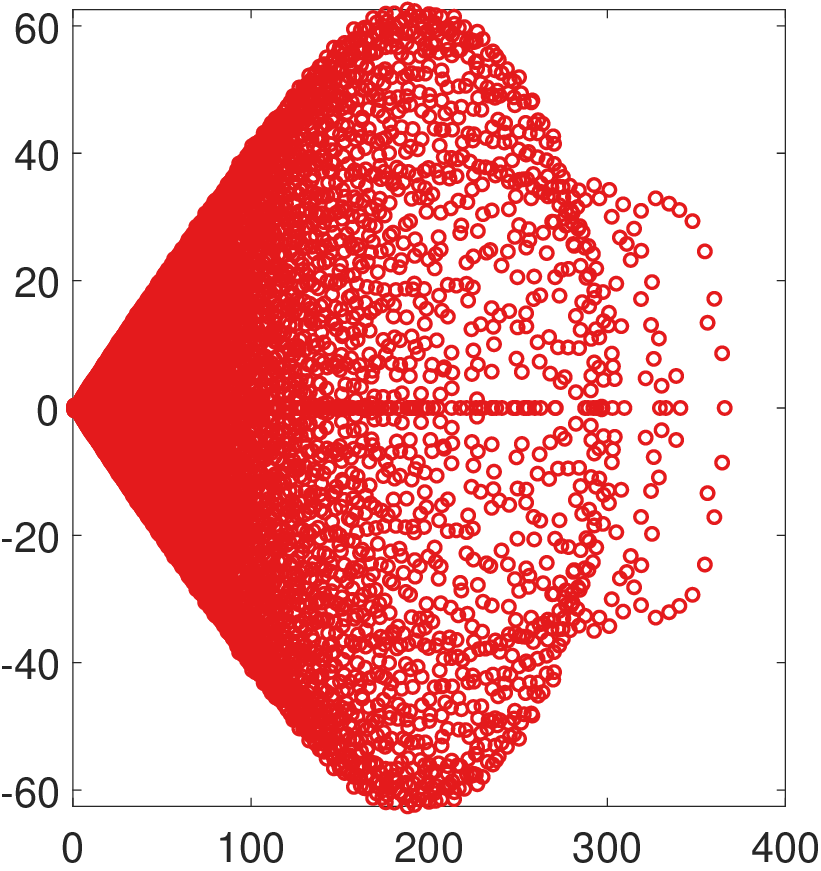} &
		\includegraphics[width=2.0in,height=1.9in]{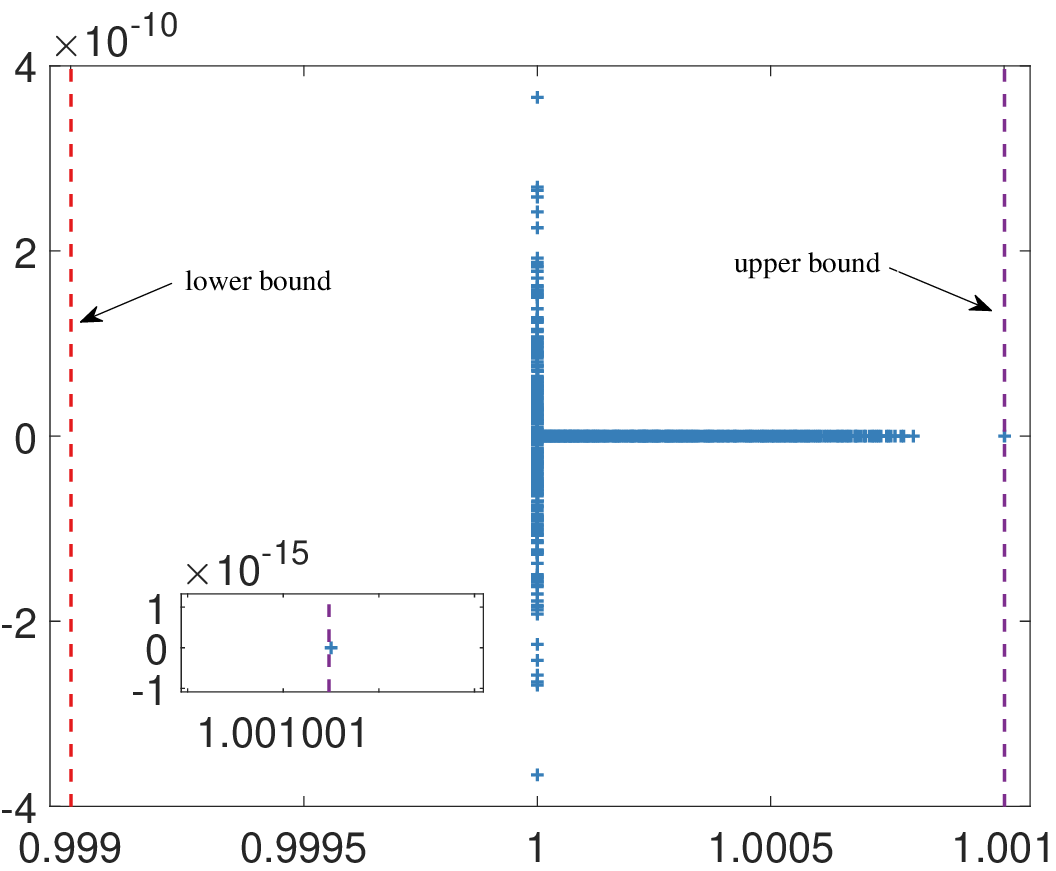} \\
	\end{tabular}
\caption{Eigenvalue distributions of the original and preconditioned matrices, $\mathcal{M}$ and $\mathcal{P}^{-1}_{\alpha}\mathcal{M}$, for Parameter Sets \uppercase\expandafter{\romannumeral3} and \uppercase\expandafter{\romannumeral4} with $N_t=N_s=2N_v=36$ and $\alpha=10^{-3}$.}
\label{fig3}
\end{figure}

\begin{figure}[ht]
\setlength{\tabcolsep}{0.2pt}
\centering
\begin{tabular}{m{0.4cm}<{\centering} m{11cm}<{\centering}}
\rotatebox{90}{Set \uppercase\expandafter{\romannumeral3}} &
\includegraphics[width=4.0in,height=0.8in]{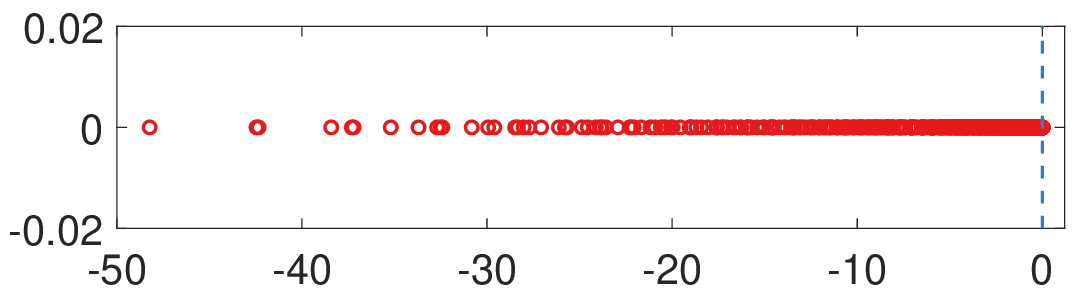} \\
\rotatebox{90}{Set \uppercase\expandafter{\romannumeral4}} &
\includegraphics[width=4.0in,height=0.8in]{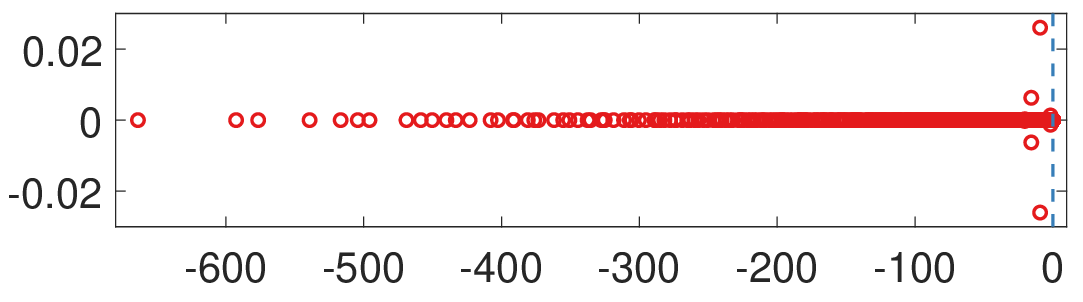}
\end{tabular}
\caption{Eigenvalues of the spatial discretization matrix $\tilde{A}$ for Parameter Sets \uppercase\expandafter{\romannumeral3} and \uppercase\expandafter{\romannumeral4} with $N_t=N_s=2N_v=36$.}
\label{fig4}
\end{figure}

We next consider the SABR model with a non-zero interest rate ($r\neq0$). In this case, the spatial operator in Eq.~\eqref{eq1.1} becomes time dependent. More precisely,
\[
\mathcal{L}(t)=d(t)\mathcal{L},
\]
where $d(t)>0$ is the known function appearing in Eq.~\eqref{eq5.7}. Following the derivation in Section~\ref{sec2}, the corresponding Crank--Nicolson AaO system is obtained by replacing $\mathcal{M}$ and $\bm{\xi}$ in Eq.~\eqref{eq2.2} by
\begin{equation}
\mathcal{M}
=
B_1\otimes I_s
-
(\tilde{D}B_2)\otimes\tilde{A},
\qquad
\bm{\xi}
=
\left[
\left[
\left(
I_s+\frac{d(t_{\frac12})}{2}\tilde{A}
\right)\bm{u}^0
\right]^{\!\top},
\bm{0}^{\top},
\ldots,
\bm{0}^{\top}
\right]^{\!\top},
\label{eq5.8}
\end{equation}
where
\[
\tilde{D}
=
\operatorname{diag}
\left(
d(t_{\frac12}),
d(t_{\frac32}),
\ldots,
d(t_{N_t-\frac12})
\right).
\]

Since $d(t)=e^{-rt}$ is a smooth function and both $r$ and $T$ are typically small in practical applications, we approximate $\tilde{D}$ by its average value,
\[
\bar d
=
\frac{1}{N_t}
\sum_{k=0}^{N_t-1}
d\!\left(t_{k+\frac12}\right),
\]
so that
\[
\tilde{D}B_2\approx\bar d\,B_2.
\]
Consequently, only a minor modification of the proposed PinT preconditioner \eqref{eq3.1} is required, namely replacing $C_2^{(\alpha)}$ by $\bar d\,C_2^{(\alpha)}$. The revised preconditioner therefore retains the same implementation strategy as described in Eq.~\eqref{eq3.2}.

The performance of the right-preconditioned GMRES(40) method with the preconditioners $\mathcal{P}_1$ and $\mathcal{P}_{\alpha}$ for the modified AaO system \eqref{eq5.8} is reported in Table~\ref{tab4}. The computations correspond to the following parameter set.

%%%%%%%%%%%%%%%%%%%%%%%%
\begin{table}[ht]\tabcolsep=5.0pt
\caption{Convergence of different right-preconditioned GMRES(40) methods with $N_t = N_s = 2N_v$ and $\alpha = 10^{-3}$}
\centering
\begin{tabular}{ccccccccc}
  \toprule
  Set   &$N_t$ &DoFs        &\multicolumn{3}{c}{$\mathcal{P}_1$} & \multicolumn{3}{c}{$\mathcal{P}_{\alpha}$}\\
      \cmidrule(lr){4-6}\cmidrule(lr){7-9}
                   &              & &Its   & CPU &Err   & Its  & CPU &Err   \\
  \midrule
\uppercase\expandafter{\romannumeral5} & 48 & 55,296 & 100 &2.25 &2.282e-1  & 6 & 0.12 &2.282e-1\\
                                   & 96 & 442,368 & 111 & 19.86 &8.352e-2& 6 & 0.85 &8.352e-2 \\
                                   & 192 & 3,538,944 & 131 &179.11&9.245e-3&6&6.24&9.244e-3\\
                                   & 384 & 28,311,552 &\dag & ---&---&6&62.49&7.257e-4\\ 
\hline
\end{tabular}
\label{tab4}
\end{table}

\begin{itemize}
\item Set \uppercase\expandafter{\romannumeral5}: $S_{max}=23$, $V_{max}=3$, $T=2$, $\beta=0.45$, $\sigma=0.4$, $r=0.03$, $\rho=-0.65$, $K=S_0=0.5$, $V_0=0.2$.
\end{itemize}

Although the theoretical analysis developed in the previous sections does not directly apply to the case of time-dependent coefficients, the numerical results in Table~\ref{tab4} remain highly encouraging. Both preconditioned GMRES(40) methods converge for this problem, while the proposed preconditioner $\mathcal{P}_{\alpha}$ consistently requires only six iterations, independent of the mesh size. In contrast, the iteration count for the classical preconditioner $\mathcal{P}_1$ increases significantly as the mesh is refined and eventually fails to converge for the finest discretization.

Figure~\ref{fig5} provides further insight into this behaviour. Although no spectral bounds are available for the modified preconditioned system, the eigenvalues of $\mathcal{P}_{\alpha}^{-1}\mathcal{M}$ remain substantially more tightly clustered than those of $\mathcal{P}_1^{-1}\mathcal{M}$. This observation is fully consistent with the iteration counts reported in Table~\ref{tab4} and again demonstrates the superior robustness and efficiency of the proposed generalized block $\alpha$-circulant preconditioner.

\begin{figure}[ht]
\setlength{\tabcolsep}{0.2pt}
	\centering
	\begin{tabular}{m{5.3cm}<{\centering} m{5.3cm}<{\centering} m{5.3cm}<{\centering}}
		\includegraphics[width=2.0in,height=1.9in]{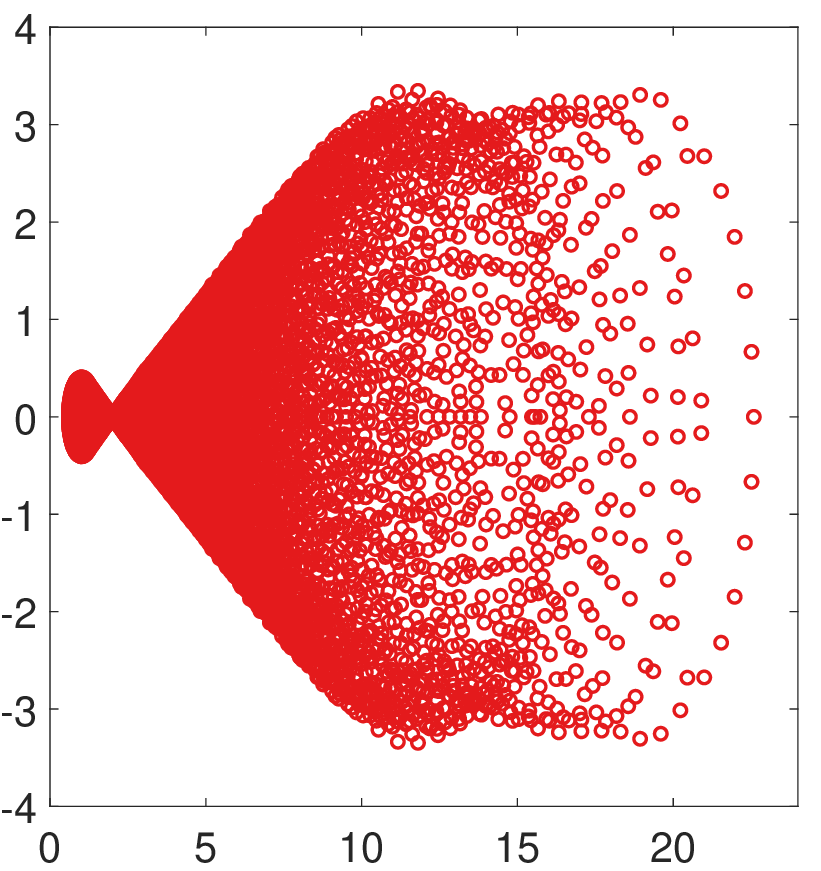} &
		\includegraphics[width=2.0in,height=1.9in]{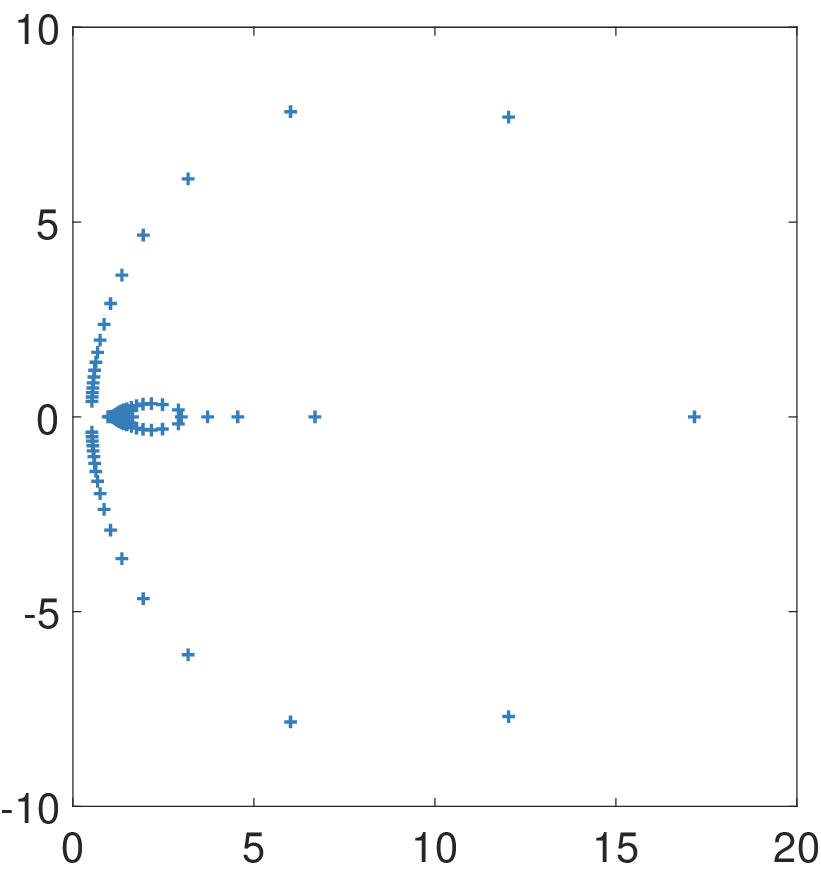} &
		\includegraphics[width=2.0in,height=1.9in]{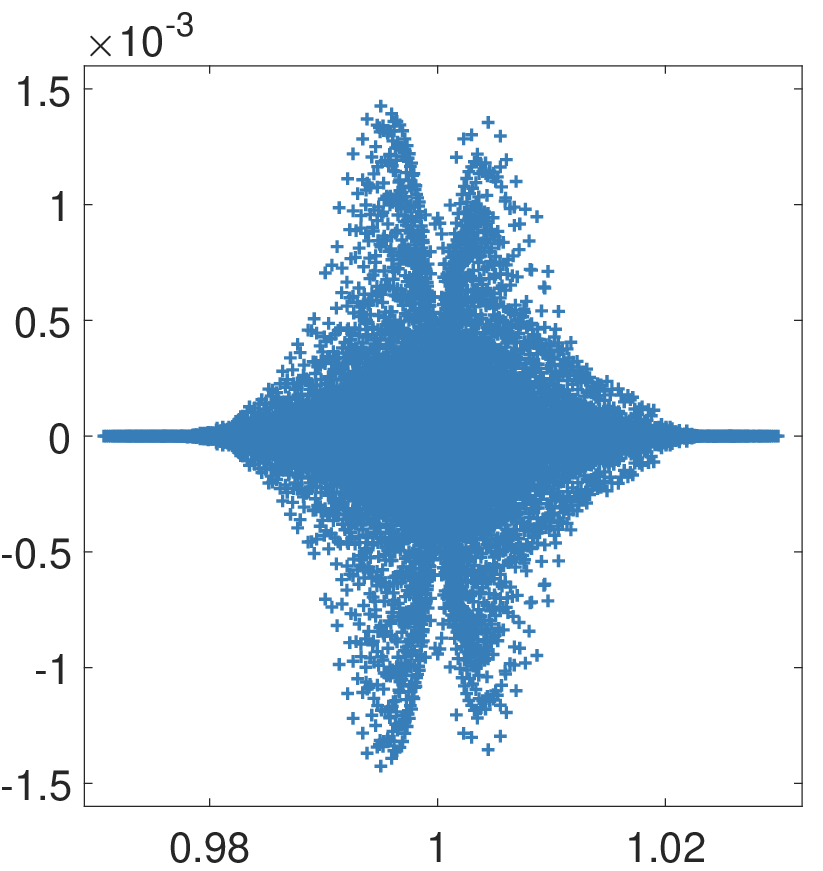} \\
		$\mathcal{M}$ & $\mathcal{P}_1^{-1}\mathcal{M}$ & $\mathcal{P}_{\alpha}^{-1}\mathcal{M}$
	\end{tabular}
\caption{Eigenvalue distributions of the original and preconditioned matrices, $\mathcal{M}$, $\mathcal{P}_1^{-1}\mathcal{M}$ and $\mathcal{P}_{\alpha}^{-1}\mathcal{M}$, for Parameter Set \uppercase\expandafter{\romannumeral5} with $N_t=N_s=2N_v=36$ and $\alpha=10^{-3}$.}
\label{fig5}
\end{figure}

\section{Concluding remarks}
\label{sec6}

This paper has presented a generalized block circulant (gBC) preconditioner for the efficient parallel-in-time solution of all-at-once systems arising from the Crank--Nicolson discretization of evolutionary PDEs. The proposed preconditioner admits an efficient parallel implementation and is applicable to a broad class of evolutionary problems, including the option pricing PDEs considered in this paper.

A detailed spectral analysis has been established for the preconditioned systems. In particular, we have shown that most eigenvalues are equal to one, while the remaining eigenvalues are confined to a uniformly bounded annulus determined solely by the parameter $\alpha$. These sharp eigenvalue bounds explain the excellent convergence behaviour observed in the numerical experiments and clarify why the classical block circulant preconditioner $\mathcal{P}_1$ may become inefficient or even singular for certain classes of problems. Furthermore, under a mild condition on $\alpha$, a mesh-independent convergence estimate for the preconditioned GMRES($m$) method has been derived.

The numerical experiments involving Heston and SABR option pricing models confirm the theoretical analysis. In all examples, the proposed gBC preconditioner significantly reduces the number of GMRES($m$) iterations and the computational time while maintaining the accuracy of the numerical solution. The experiments also demonstrate that the proposed approach remains effective for problems for which the classical block circulant preconditioner fails.

The present work also suggests several directions for future research. In particular, the favourable performance of the proposed preconditioner for evolutionary PDEs with time-dependent coefficients deserves a rigorous theoretical analysis. It would also be of interest to investigate the applicability of the proposed gBC preconditioning strategy to other classes of all-at-once systems and to related block Toeplitz problems arising in large-scale scientific computing.

\section*{Acknowledgments}
\addcontentsline{toc}{section}{Acknowledgments}
\label{sec7}
%%%%
\textit{This work is supported by the National Natural Science Foundation of
China (No. 12401536) and Sichuan Science and Technology Program (No. 2024NSFSC0441).}
\section*{Data availability}
The data-sets generated and analyzed during the current study are not publicly available but are available from the authors on reasonable request.

\section*{Conflict of interests}
The authors declare that they have no known competing financial interests or personal relationships that could have appeared to influence the work reported in this paper.
%\section*{References}
\addcontentsline{toc}{section}{References}
\bibliography{Ref}
%%%%%%%%%%%%%%%%%%%%%%%%%%%
\end{document}